\newtheorem{theorem}{Theorem}[section]
\newtheorem{corollary}{Corollary}
\newtheorem{lemma}[theorem]{Lemma}
\newtheorem{proposition}{Proposition}
\theoremstyle{definition}
\newtheorem{definition}[theorem]{Definition}
\newtheorem{remark}{Remark}
\newtheorem{example}{Example}
\title[Riemann-Liouville derivative of  integrable distributions] 
      {Riemann-Liouville derivative over the space of integrable distributions }
\author[Mar\'ia Guadalupe Morales, Zuzana Do\v{s}l\'a and Francisco J. Mendoza]{}
\subjclass{Primary: 26A33, 26A39; Secondary: 46F12, 47G20.}
 \keywords{Distributional Henstock-Kurzweil integral, convolution, Henstock-Kurzweil integrable distribution,  Riemann-Liouville fractional differential operator, Riemann-Liouville fractional integral operator.}
 \email{maciasm@math.muni.cz}
 \email{dosla@math.muni.cz}
 \email{jmendoza@fcfm.buap.mx}
\thanks{The second author is supported by Grant GA 17-03224S of the Czech Science Foundation. The third author acknowledges partial support by SNI-CONACYT and VIEP-BUAP}
\thanks{$^*$ Corresponding author: Mar\'ia Guadalupe Morales.}
\begin{document}
\maketitle

\centerline{\scshape Mar\'ia Guadalupe Morales$^*$ and Zuzana Do\v{s}l\'a}
\medskip
{\footnotesize
 \centerline{Department of Mathematics and Statistics}
   \centerline{Faculty of Science, Masaryk University}
   \centerline{Kotl\'a\v{r}sk\'a 2, 611 37 Brno, Czech Republic}
} 

\medskip

\centerline{\scshape Francisco J. Mendoza}
\medskip
{\footnotesize
 \centerline{Facultad de Ciencias F\'isico-Matem\'aticas}
   \centerline{ Benem\'erita Universidad Aut\'onoma de Puebla}
   \centerline{Av. San Claudio y 18 Sur S/N, Puebla, Puebla, 72570, M\'exico}
}
\bigskip

 \centerline{(Communicated by Abdon Atangana)}

\begin{abstract}
In this paper, we generalize the  Riemann-Liouville differential and integral operators on the space of  Henstock-Kurzweil integrable distributions, $D_{HK}$. We obtain new fundamental properties of the fractional derivatives and integrals, a general version of the fundamental theorem of fractional calculus, semigroup property for the Riemann-Liouville integral operators and relations between the Riemann-Liouville integral and differential operators. Also, we achieve a generalized characterization of the solution for the Abel integral equation. Finally, we show relations for the Fourier transform of fractional derivative and integral. These results are based on the properties of the distributional Henstock-Kurzweil integral and convolution.
\end{abstract}

\section{Introduction}

Fractional calculus is devoted to studying the different
possibilities of defining the differential operator of arbitrary
order, properties, possible relations with the integral operator,
and applications. There exist several classical
definitions for fractional derivatives, for example, Caputo,
Riemann-Liouville, Marchaud, Hadamard and the Weyl derivatives,
among others. In recent years with the intention
of solving some problems that the classical fractional derivatives
do not achieve and as a consequence of those, new fractional derivatives have been defined combining power law,
exponential decay and Mittag-Leffler kernel; among them those of
Liouville-Caputo, Atanga-Caputo, Atanga-G\'{o}mez and
Atanga-Baleanu derivatives, see \cite{At1}, \cite{At2}, \cite{At4}, \cite{At5} and
\cite{Go2}.

This branch of mathematics is one of the most powerful
 modeling tools applied in many fields of
science, physics, chemistry, biology, engineering, see e.g. \cite{At1}, \cite{At2}%
, \cite{At4},  \cite{A3}, \cite{A9}, \cite{Go1},  \cite{A15},
\cite{Mo1} and \cite{Ta1}.
In general, fractional calculus has been developed in the context
of the Lebesgue integral, see e.g. \cite{A6}, \cite{A12},
\cite{A14}, \cite{A20} and \cite{A27}.

In this article, we use  more general integrals than the Lebesgue integral,
expanding the space of functions where it is possible to apply
fractional calculus. Although our study is theoretical, we primarily pursue an aim partially similar to those developed in \cite{At2}, \cite{At3}, \cite{At6}, \cite%
{Go2} and \cite{Ye1}. For example, the use of generalized integration helps us to recover the initial function, a fact searched in the above references.

The basic idea behind fractional calculus is the fundamental theorem of calculus (FTC).
First, we denote by $J_a$ the operator that maps a integrable function $f$ (in some sense, for example Riemann integrable in the compact interval $[a,b]$), onto its primitive centered at $a$. We denote by $D$ the operator that maps a differentiable function onto its derivative,
$Df:=f'(x).$ For $n\in\mathbb{N}$, $J_a^n$ and  $D^n$ denote the  $n$-fold iterates of $ J_a$ and $D$, respectively. Therefore, using this notation the FTC reads as $$DJ_af(x)=f(x)\ \ \text{a.e. on }\ \ [a,b],$$
and this implies that
\begin{equation}
D^nJ_a^nf(x)=f(x) \ \ \text{a.e. on }\ \ [a,b],\label{3}
\end{equation}  for $n\in\mathbb{N}$.
Moreover, it is possible to obtain by induction that for any $n\in \mathbb{N}$ and $f$  Riemann-integrable function, $J_a^n$  is given as
\begin{equation*}\label{defintn}
J_a^nf(x)=\frac{1}{(n-1)!}\int_a^x (x-t)^{n-1}f(t)\,dt,
\end{equation*}where  $a\leq x\leq b$.  Using the Riemann-Liuoville fractional derivative the equality (\ref{3}) holds for any $n>0$, even non-integer, see e.g. \cite{A6}, \cite{A12} and \cite{A20}. This means that we have a version of the FTC in a fractional sense.
Besides, according to this fractional derivative, an arbitrary function does not need to
be continuous nor be differentiable in the usual sense. Thus, this definition is an excellent tool for the description of memory and hereditary properties of several materials and processes, \cite{A12}, \cite{A20} and \cite{A27}.

On the other hand, the integration theory continues developing. For example, in the last century R. Henstock and J. Kurzweil introduced a generalized integral, it is known as Henstock-Kurzweil integral, see e.g. \cite{A2} and \cite{A21}. Later, E. Talvila obtained fundamental properties about the Fourier transform using the Henstock-Kurzweil integral, see \cite{A23}.
Moreover, the Henstock-Kurzweil integral can be generalized in various ways, for example, one can consider Henstock-Kurzweil-Stieljtes type integrals, see \cite{Mo}. Another possible direction is the distribution theory, in \cite{A22} and \cite{A24} E. Talvila extended this integral in a distributional sense and achieved new properties for the convolution in a generalized sense. Thus, with the introduction of new
integration theories, the possibility to extend fundamental results arises.  The following relations are well known,
\begin{equation}\label{subsets} L^1[a,b]\subsetneq HK[a,b]\subsetneq \widehat{HK[a,b]} \simeq D_{HK},
\end{equation}
where $[a,b]$ is any compact interval, the space of Lebesgue integrable functions is denoted by $L^1[a,b]$ and the space of Henstock-Kurzweil integrable function  by $ HK[a,b]$. $\widehat{HK[a,b]} $ denotes the completion of $ HK[a,b]$ with  respect to the Alexiewicz norm, and  $D_{HK}$  the space of all distributions each of which is the distributional derivative of a continuous function. The last relation in (\ref{subsets}) means that the completion of $HK[a,b]$ is isometrically isomorphic to $D_{HK}$, see \cite{A4} and \cite{A5}.
The subsets of integrable functions are strictly contained, and the values of the integrals coincide; see \cite{A13} , \cite{A16} and \cite{A17}. The Riemann, Lebesgue, Henstock-Kurzweil, Perron, Denjoy, and improper integrals are special cases contained in the distributional Henstock-Kurzweil integral. Moreover, it is valid on unbounded intervals, see \cite{A24}.

 Motivated by the suitability and applicability of the Riemann-Liouville fractional derivative, and the generality offered by the distributional integral,
 we define the Riemann-Liouville fractional integral and differential operators in the context of the distributional Henstock-Kurzweil integral. Also, we extend fundamental properties (see e.g., Theorem \ref{coro1} and Theorem \ref{semigroup}) and obtain new relations between the fractional integral and differential operators, see Theorem \ref{prop2}. In particular, we prove that the fractional differential operator inverts the fractional integral in a distributional sense, see Theorem \ref{teo6}.  Finally, we show some applications of the fractional derivative, for example, a general characterization of the solution for the Abel integral equation and new properties of the Fourier transform for the fractional integral and derivative.

\section{Preliminaries}

Following the notation from \cite{A6}, we introduce the Riemann-Liouville integral and differential operators.

First, we recall that function $\Gamma:(0,\infty)\rightarrow\mathbb{R},$  defined by
\begin{equation}\label{gamma} \Gamma(x):=\int_0^\infty t^{x-1}e^{-t}dt,
\end{equation}
is  Euler's Gamma function. 
Let $n$ be a positive number. We define $m=\lceil n \rceil$ as  the least integer greater than or equal to  $n$.

\begin{definition}Let $n\in\mathbb{R}^+\cup \{0\}$. The operator $J_a^n$ is defined in $L^1[a,b]$ by
\begin{equation}
J_a^nf(x):=\frac{1}{\Gamma(n)}\int_a^x (x-t)^{n-1}f(t)\,dt
\end{equation}
for $a\leq x\leq b$, is called the \textit{Riemann-Liouville fractional integral operator of order $n$}. For $n=0$, we set $J_a^0:=I$, the identity operator.
\end{definition}
Note that for any $f\in L^1[a,b]$ and $n\in\mathbb{R}^+\cup \{0\}$,  the operator $J_a^nf(x)$ exists for almost every $x\in[a,b]$ and is also an element of $L^1[a,b]$,  it means for any $n\in\mathbb{R}^+$
$$J_a^n(L^1[a,b])\subset L^1[a,b],$$see e.g.  \cite[Theorem 2.1]{A6}.
\begin{definition}\label{difoperator} Let $n\in\mathbb{R}^+\cup \{0\}$ and $m=\lceil n \rceil$. The operator $D_a^n$ is defined by
\begin{equation}
D_a^nf:=D^mJ_a^{m-n}f
,
\end{equation}
when $D_a^nf$ exists a.e. on $[a, b]$ is called the \textit{Riemann-Liouville fractional differential operator of order $n$}, where $D^{m}$ denotes the $m-$folds iterates of the derivative.  For $n=0$, we set $D_a^0:=I$, the identity operator.
\end{definition}
Note than $D_a^nf$ might not exist. However, there are sufficient conditions for the existence of fractional
derivatives provided e.g. in \cite{A6}, \cite{A12} and \cite{A20}. In \cite[Theorem 2.14]{A6} it is shown that the Riemann-Liouville fractional differential operator inverts the Riemann-Liouville fractional integral operator over $L^1[a,b]$, where $[a,b]$ is any compact interval. This means that the FTC holds over $L^1[a,b]$ considering the Riemann-Liouville fractional derivative. This result is known as the fundamental theorem of fractional calculus.

Now we introduce the definition of the distributional Henstock-Kurzweil integral. Recall, the Lebesgue integral is characterized  in terms of absolutely continuous functions, $AC$. In the case of Henstock-Kurzweil integral, there is an  analogous characterization   in terms of  generalized absolutely continuous functions in the restricted sense $ACG_*$. This means,  $F\in ACG_*$ if and only if there exists $f\in HK[a,b]$ such that $F(x)=\int_a^xf+F(a)$, hence  $F'=f$ a.e., see \cite{A10}. However, if $F$ is a continuous function, then the generalized function  and the distributional derivative are needed because there are  continuous functions that are differentiable nowhere.

Let $(a,b)$ be an bounded  open interval in $\mathbb{R}$, we define
\[\mathcal{D}(a,b):=\{\phi:(a,b)\rightarrow \mathbb{R}\ | \ \phi\in C^\infty \text{ and }  \phi \text{ has a compact support in } (a,b) \}.\]
Moreover, it is said  that a sequence $(\phi_n)\subset \mathcal{D}(a,b)$ converges to $\phi\in \mathcal{D}(a,b)$ if there is a compact set $K\subset (a,b)$ such that all $\phi_n$ have support in $K$ and for each integer $m\geq 0$, the sequence of derivatives $(\phi_n^{(m)})$ converges to $\phi^{(m)}$ uniformly on $K$, see e.g. \cite{A11}.

The dual space of $\mathcal{D}(a,b)$ is denoted by $\mathcal{D}'(a,b)$ and is the space of  continuous linear functionals on $\mathcal{D}(a,b)$. This refers  to the distributions on $(a,b).$ Let
$$C_0:=\{F\in C[a,b]:F(a)=0\}.$$
It is well known that $C_0$ is a Banach space with the uniform norm,  $||F||_\infty:=\sup_{t\in[a,b]}|F(t)|$.

We will follow the notation from \cite{A25} to introduce the distributional Henstock-Kurzweil integral.
\begin{definition}\label{defdistri} A distribution $f\in \mathcal{D}'(a,b) $ is said to be a Henstock-Kurzweil integrable distribution on  $[a,b]$ if there exists a continuous function $F\in C_0$ such that $F'=f$ (the distributional derivative of $F$ is $f$). In other words, $F$ is the primitive of $f$. The distributional Henstock-Kurzweil integral of $f$ on $[a,b]$ is denoted by $$\int_a^bf(t)\,dt:=F(b)-F(a).$$
\end{definition}
We set $DF:=F'$ in a distributional sense. The space of all the  Henstock-Kurzweil integrable  distributions on $[a,b]$ is denoted  by $D_{HK}$. For $f\in D_{HK}$, we define the Alexiewicz norm in $ D_{HK}$ as
$$||f||_A:=||F||_\infty,$$ where $DF=f$. In particular, if $f\in HK[a,b]$, then $||f||_A:=\sup_{x\in[a,b]}|\int_a^xf|.$ Now let us consider  $f\in D_{HK}$ and $(f_k)\subset HK[a,b]$ such that $||f_k-f||_A\rightarrow 0$, as $k\rightarrow \infty$. Let us denote for each $k\in\mathbb{N}$, $F_k$ as the primitive of $f_k$ ($F_k'(x)=f_k(x)$ a.e.). Since $(f_k)\subset HK[a,b]$, then  $\langle f_k,\phi \rangle=\int_a^bf_k(t)\phi(t)\,dt$ for all $\phi\in \mathcal{D}(a,b)$ and
$$\langle f_k,\phi \rangle       : =\int_a^bf_k(t)\phi(t)\,dt=-\int_a^bF_k\phi'.$$
On the other hand, since $||f-f_k||_{A}\rightarrow 0$, as $k\rightarrow \infty$,  $(F_k)$ is a Cauchy sequence in $C[a,b]$. Therefore, exists $F\in C[a,b]$ such that $F_k(x)\rightarrow F(x)$ and for every $\phi\in\mathcal{D}(a,b)$
$$\lim_{k\rightarrow\infty}\langle f_k,\phi \rangle        =-\lim_{k\rightarrow\infty}\int_a^bF_k\phi'=-\int_a^bF\phi'=-\langle F,\phi' \rangle=\langle F',\phi \rangle.   $$
Thus,  in the sense of distributions $(f_k)$ converges (weakly) to $F'$. Now, by H\"older inequality, \cite[Theorem 7]{A24},
$$\left|\int_a^b (f_k-f)\phi\right|\leq 2||f_k-f||_A||\phi||_{BV}\;\; \text{for each}\;\; \phi\in\mathcal{D}(a,b).$$
Hence $f_k\rightarrow f$, in the distributions sense. Therefore $f=F'$.


Note that it  does not depend on the Cauchy sequence because  the set of conti\-{nuous} functions with uniform norm  is a Banach space.
Moreover, if $f\in D_{HK}$, then $f$ has many primitives in $C[a,b]$, all differing  by a constant. Nevertheless, $f$ has exactly one primitive in $C_0$, see \cite[Theorem 6, $ii$)]{A5}.

By \cite[Theorem 2, Theorem 3]{A24},  $D_{HK}$ is a Banach space and is a separable space with respect to the Alexiewicz norm, respectively. 
On the other hand, in \cite{A4} and \cite{A5} it is shown that the completion of the  Henstock-Kurzweil integrable functions space, $\widehat{HK[a,b]}$,  is isomorphic to $D_{HK}$.
Furthermore, in \cite{A5}, \cite{A24} and \cite{A26} the following result is proved.
\begin{theorem}\label{isomorphic} $D_{HK}$ is isomorphic to the space $C_0$.
\end{theorem}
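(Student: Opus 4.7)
The plan is to exhibit an explicit isometric isomorphism between $D_{HK}$ and $C_0$ using the primitive map, and then verify the four standard properties: well-definedness, linearity, bijectivity, and the norm identity.

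First I would define $\Phi : D_{HK} \to C_0$ by sending each $f \in D_{HK}$ to its unique primitive $F \in C_0$ (the function with $F(a)=0$ and $F' = f$ in the distributional sense). The existence of such an $F$ is built into Definition \ref{defdistri}, and its uniqueness is guaranteed by \cite[Theorem 6, $ii$)]{A5}, which is already cited in the excerpt. So $\Phi$ is well defined. Linearity is immediate: if $f_1, f_2 \in D_{HK}$ have primitives $F_1, F_2 \in C_0$ and $\alpha, \beta \in \mathbb{R}$, then $\alpha F_1 + \beta F_2 \in C_0$ and its distributional derivative is $\alpha f_1 + \beta f_2$, so $\Phi(\alpha f_1 + \beta f_2) = \alpha \Phi(f_1) + \beta \Phi(f_2)$.

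Next I would verify bijectivity. For injectivity, if $\Phi(f) = 0$ then the primitive of $f$ in $C_0$ is the zero function, and its distributional derivative is the zero distribution, so $f = 0$ in $\mathcal{D}'(a,b)$. For surjectivity, given any $F \in C_0$, its distributional derivative $f := F' \in \mathcal{D}'(a,b)$ is by definition a Henstock-Kurzweil integrable distribution with $F$ as its primitive (the condition $F \in C[a,b]$ and $F(a) = 0$ is exactly what Definition \ref{defdistri} requires), so $f \in D_{HK}$ and $\Phi(f) = F$.

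Finally, I would establish that $\Phi$ is an isometry: by the very definition of the Alexiewicz norm on $D_{HK}$,
\[
\|\Phi(f)\|_{\infty} = \|F\|_\infty = \|f\|_A,
\]
so $\Phi$ preserves norms and is in particular continuous with continuous inverse. Combined with linearity and bijectivity this gives the desired isomorphism of Banach spaces.

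I do not anticipate a real obstacle here, because all the substantive work (existence of a continuous primitive, uniqueness in $C_0$, completeness of $D_{HK}$) has already been invoked from \cite{A4,A5,A24}; the present theorem is essentially a packaging of Definition \ref{defdistri} and the definition of $\|\cdot\|_A$ into the statement that the primitive map is an isometric isomorphism. If any step requires care, it is checking that the distributional equation $F' = f$ transfers cleanly between the two spaces, but this is immediate because $C_0 \hookrightarrow \mathcal{D}'(a,b)$ via the usual identification and distributional differentiation is continuous on $\mathcal{D}'(a,b)$.
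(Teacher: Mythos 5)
Your proof is correct and is exactly the canonical argument: the paper itself does not prove Theorem \ref{isomorphic} but defers to \cite{A5}, \cite{A24} and \cite{A26}, where the isomorphism is precisely the primitive map $f \mapsto F$ you describe, with the isometry $\|f\|_A = \|F\|_\infty$ holding by the very definition of the Alexiewicz norm. The one ingredient worth making explicit (which you do cite) is the uniqueness of the primitive in $C_0$, since without it neither $\Phi$ nor $\|\cdot\|_A$ would be well defined.
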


Another important fact is that the Banach dual of $HK[a,b]$ is isomorphic to the space $BV[a,b]$ of all functions of bounded variations on $[a,b]$. Moreover, $HK[a,b]^*=D_{HK}^*=BV[a,b]$, see \cite{A1}. To consult the formal definitions, see \cite{A10}. Also there exists a version of FTC in the Henstock-Kurzweil distributional sense.
\begin{theorem}{\rm{(\cite[Theorem 4]{A24} Fundamental theorem of calculus)}}\label{FTCdis}
\begin{itemize}
\item[(i)] Let $f\in D_{HK}$ and $F(x):=\int_a^x f.$ Then $F\in C_0$ and $DF=f$.
\item[(ii)] Let $F\in C[a,b].$ Then $\int_a^x DF=F(x)-F(a)$ for all $x\in [a,b]$.
\end{itemize}
\end{theorem}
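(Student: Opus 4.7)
The plan is to observe that both parts of the theorem unwind almost directly from Definition \ref{defdistri} together with the existence and uniqueness of the primitive in $C_0$ already noted in the excerpt (via \cite[Theorem 6, $ii$]{A5}). So the work lies in a careful verification, not in a genuine argument.

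For part (i), I would start from the definition: since $f\in D_{HK}$, there exists a (unique) $G\in C_0[a,b]$ whose distributional derivative is $f$. For arbitrary $x\in(a,b]$, restrict $f$ to the open interval $(a,x)$ and note that $G|_{[a,x]}\in C_0[a,x]$, so $f$ is still a Henstock--Kurzweil integrable distribution on $[a,x]$. By the definition of the integral, $\int_a^x f = G(x)-G(a)=G(x)$, so the function $F(x):=\int_a^x f$ coincides with $G$ on $[a,b]$ (with the convention $F(a)=0$). Hence $F\in C_0$ and $DF=DG=f$.

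For part (ii), given $F\in C[a,b]$, set $H(x):=F(x)-F(a)$. Then $H\in C_0$ and, since distributional differentiation annihilates constants, $DH=DF$. By Definition \ref{defdistri}, this exhibits $DF$ as a Henstock--Kurzweil integrable distribution on every subinterval $[a,x]$ with primitive $H|_{[a,x]}\in C_0[a,x]$. Applying the definition once more yields $\int_a^x DF = H(x)-H(a)=F(x)-F(a)$, which is the claim.

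The only subtlety I anticipate is bookkeeping: one must be sure that the primitive of $f$ on the fixed interval $[a,b]$ really does restrict to an element of $C_0[a,x]$ for every $x\in(a,b]$, and that the integral defined on the smaller interval agrees with what one obtains from the global primitive. This is immediate once one knows that the $C_0$-primitive is unique (two candidates would differ by a constant, and both vanish at $a$). I do not expect any genuine obstacle, since the theorem is essentially the statement that the definition of $\int_a^{\cdot}f$ furnishes a well-defined inverse to the distributional derivative $D$ on the pair $(D_{HK},C_0)$ — precisely the content Theorem \ref{isomorphic} will later formalize as an isomorphism.
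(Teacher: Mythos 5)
The paper states this theorem as a citation of \cite[Theorem 4]{A24} and gives no proof of its own, so there is nothing internal to compare against. Your argument — unwinding Definition \ref{defdistri} together with the uniqueness of the $C_0$-primitive to get part (i) (so that $x\mapsto\int_a^x f$ coincides with the global primitive $G$), and observing that $F-F(a)$ is a $C_0$-primitive of $DF$ for part (ii) — is correct and is essentially the standard verification one would find in the cited reference.
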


\begin{definition} Let $f$ and  $(f_k)$ in $ D_{HK}$.
\begin{itemize}
\item[(i)] $(f_k)$ converges in Alexiewicz norm to $f$ if $||f_k-f||_A\rightarrow 0$ as $k\rightarrow\infty$.
\item[(ii)] $(f_k)$ converges weakly to $f$ in $\mathcal{D}(a,b)$ if $\langle f_k-f,\phi\rangle=\int_a^b(f_k-f)\phi\rightarrow 0 $ ($k\rightarrow\infty$) for each $\phi\in\mathcal{D}(a,b).$
\item[(iii)]  $(f_k)$ converges weakly to $f$ in $BV$ if $\langle f_k-f,g\rangle=\int_a^b(f_k-f)g\rightarrow 0 $ ($k\rightarrow\infty$) for each $g\in BV[a,b]$.

\end{itemize}
\end{definition}

In \cite{A24} is proved the following result.
\begin{theorem}\label{convergence} We have
\begin{itemize}
\item [(i)] Convergence in Alexiewicz norm implies weak convergence in $\mathcal{D}(a,b)$ and $BV[a,b]$.
\item[(ii)] Weak convergence in $BV[a,b]$ implies weak convergence in $\mathcal{D}(a,b)$.
\item[(iii)] Nevertheless, weak convergence in $\mathcal{D}(a,b)$ does not imply weak convergence in $BV[a,b]$ or weak convergence in $BV[a,b]$ does not imply convergence in Alexiewicz norm.

\end{itemize}

\end{theorem}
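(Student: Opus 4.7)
The plan is to handle the three parts separately, exploiting the Hölder-type inequality for $D_{HK}$ mentioned in the excerpt (from \cite[Theorem 7]{A24}), namely
\[
\left|\int_a^b fg\right|\leq 2\,\|f\|_A\,\|g\|_{BV}
\qquad\text{for every } f\in D_{HK},\ g\in BV[a,b].
\]
This single estimate will carry the bulk of the argument.

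For part (i), I would first observe that any $g\in BV[a,b]$ gives, via the Hölder estimate,
\[
|\langle f_k-f,g\rangle|=\left|\int_a^b (f_k-f)g\right|\leq 2\,\|f_k-f\|_A\,\|g\|_{BV},
\]
so $\|f_k-f\|_A\to 0$ forces $\langle f_k-f,g\rangle\to 0$; that is weak convergence in $BV[a,b]$. Then I would note that every test function $\phi\in\mathcal{D}(a,b)$ is $C^\infty$ with compact support in $(a,b)$, hence of bounded variation on $[a,b]$, so the same inequality applied with $g=\phi$ yields weak convergence in $\mathcal{D}(a,b)$. This simultaneously sets up part (ii): since $\mathcal{D}(a,b)\subset BV[a,b]$, weak convergence against every $g\in BV[a,b]$ in particular forces $\langle f_k-f,\phi\rangle\to 0$ for every $\phi\in\mathcal{D}(a,b)$, which is exactly weak convergence in $\mathcal{D}(a,b)$.

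For part (iii), I need two explicit counterexamples. For ``weak in $\mathcal{D}(a,b)$ does not imply weak in $BV[a,b]$'', the natural candidate is a highly oscillatory sequence such as $f_k(x)=\cos(kx)$ on a fixed bounded interval: integration against a smooth compactly supported test function vanishes in the limit by repeated integration by parts, but pairing against a suitable bounded variation function (for instance a characteristic function of a subinterval, which lies in $BV[a,b]$) does not vanish; this separates the two modes of convergence. For ``weak in $BV[a,b]$ does not imply Alexiewicz norm convergence'', I would produce a sequence whose primitives $F_k$ converge pointwise but not uniformly to the primitive $F$ of the limit, for instance bump functions of fixed $L^1$-mass translated to $0$; the duality pairing against any $g\in BV$ tends to $g(0)$ times the mass by the integration-by-parts formula $\int(f_k-f)g=-\int(F_k-F)\,dg$, which vanishes when $F_k\to F$ pointwise and $g$ is continuous at the singular point (a density/approximation argument handles general $g\in BV$), whereas $\|f_k-f\|_A=\|F_k-F\|_\infty$ stays bounded away from $0$.

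The routine step is part (i) and the implication in part (ii); the Hölder inequality does all the work. The main obstacle is the construction in part (iii): one must carefully choose sequences where the relevant primitives converge pointwise but not uniformly, and where the test objects separate the topologies. I would expect to lean on the standard Riemann–Lebesgue style argument for the oscillatory example and on a translated-bump construction for the second, verifying the $BV$-pairings limit to zero by the integration-by-parts identity $\langle f_k,g\rangle=F_k(b)g(b)-\int_a^b F_k\,dg$ together with pointwise convergence of $F_k$ and the dominated convergence theorem for the Stieltjes integral against $dg$.
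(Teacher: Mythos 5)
The paper offers no proof of this theorem at all --- it is quoted verbatim from Talvila \cite{A24} --- so your attempt can only be judged on its own merits. Parts (i) and (ii) are correct and routine: the H\"older-type estimate $\left|\int_a^b(f_k-f)g\right|\leq 2\|f_k-f\|_A\|g\|_{BV}$ gives (i), and (ii) is just the inclusion $\mathcal{D}(a,b)\subset BV[a,b]$. The genuine gaps are both in part (iii), where neither counterexample works as stated.

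First, $f_k(x)=\cos(kx)$ does \emph{not} separate weak convergence in $\mathcal{D}(a,b)$ from weak convergence in $BV[a,b]$: every $g\in BV[a,b]$ is bounded, hence lies in $L^1[a,b]$, so the Riemann--Lebesgue lemma already gives $\int_a^b g(x)\cos(kx)\,dx\to 0$; in particular your proposed separating function $\chi_{[c,d]}$ yields $\int_c^d\cos(kx)\,dx=O(1/k)\to 0$. The real point of this half of (iii) is that test functions vanish near the endpoints of $(a,b)$ while $BV$ functions need not, so the mass must escape to the boundary: take $F_k\in C_0$ vanishing on $[a,b-1/k]$ and rising continuously to $F_k(b)=1$, and put $f_k=DF_k$. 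Then $\langle f_k,\phi\rangle=-\int_a^b F_k\phi'=0$ for all large $k$ (the support of $\phi'$ is a compact subset of $(a,b)$), yet $\int_a^b f_k\cdot 1=F_k(b)-F_k(a)=1\not\to 0$ with $1\in BV[a,b]$. Second, translated bumps of fixed $L^1$-mass form an approximate identity, and by your own computation $\langle f_k,g\rangle\to g(0^+)\cdot(\mathrm{mass})$; this limiting functional is a point evaluation, not of the form $g\mapsto\int_a^b fg$ for any $f\in D_{HK}$ (the would-be primitive is a Heaviside step, which is not continuous). So that sequence simply does not converge weakly in $BV[a,b]$ to an element of $D_{HK}$, which is what the paper's Definition of weak $BV$-convergence requires, and it cannot witness the failure of the implication. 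Your underlying mechanism --- primitives converging pointwise and boundedly but not uniformly, combined with $\langle f_k-f,g\rangle=(F_k-F)(b)g(b)-\int_a^b(F_k-F)\,dg$ and dominated convergence for the finite Stieltjes measure $dg$ --- is exactly right; you just need to instantiate it correctly, e.g.\ with $F_k$ a triangular bump of height $1$ supported on $[a+1/(k+1),\,a+1/k]$ and $F=0$, for which $\langle f_k,g\rangle\to 0$ for every $g\in BV[a,b]$ while $\|f_k\|_A=\|F_k\|_\infty=1$.
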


Consider
 \[\mathcal{D}:=\{\phi:\mathbb{R}\rightarrow\mathbb{R}\ |\ \phi\in C^\infty \text { and }\phi \text{ has a compact support on } \mathbb{R}\}.\]
 We say  that a sequence $(\phi_n)\subset \mathcal{D}$ converges to $\phi\in \mathcal{D}$ if there is a compact set $K$ such that all $\phi_n$ have support in $K$ and for each integer $m\geq 0$, the sequence of derivatives $(\phi_n^{(m)})$ converges to $\phi^{(m)}$ uniformly on $K$, see e.g. \cite{A11}. The space of distributions on $\mathcal{D}$  is denoted by
  $\mathcal{D}'$. The space of  Denjoy integrable distributions  is defined by
\[\mathcal{A}_C:=\{f\in\mathcal{D}'\ |\ f=F' \text{ for } F\in \mathcal{B}_C\},\] where $$\mathcal{B}_C:=\{F:\mathbb{R}\rightarrow\mathbb{R}\ |\ F\in C^0(\overline{\mathbb{R}}), F(-\infty)=0\}.$$
We denote by   $C^0(\overline{\mathbb{R}})$ the continuous functions such that the limits  $\lim_{x\rightarrow\infty} F(x)$ and $\lim_{x\rightarrow-\infty}F(x)$ exist in $\mathbb{R}$. Setting $F(\pm\infty):=\lim_{x\rightarrow\pm\infty}F(x)$.

Denote $$BV:=\{g:\mathbb{R}\rightarrow\mathbb{R} \ | \ Vg<\infty\}$$ where $Vg:=\sup \Sigma|g(x_i)-g(y_i)|$ and the supremum is taken over all disjoint intervals $\{(x_i,y_i)\}$.



The convolution of  $g,f:B\subset \mathbb{R}\rightarrow\mathbb{R}$ is defined by
\begin{equation}\label{convolution}
g*f(x):=\int_B g(x-y)f(y)\,dy
\end{equation}
always that the integral (\ref{convolution}) exists in some sense.  It is well known that if $f,g\in L^1(\mathbb{R})$, then the convolution of $g$ and $f$ belongs to $L^1(\mathbb{R})$, \cite{A19}. Also, the convolution of $g\in BV$ and $f\in \mathcal{A}_c$  is defined with respect to the Henstock-Kurzweil distributional integral. On the other hand,  when $g\in L^1(\mathbb{R})$ and $f\in \mathcal{A}_c$, and knowing that $L^1(\mathbb{R})$ is dense in $\mathcal{A}_c$, their convolution is defined by
$$g*f(x):=\lim_{k\rightarrow\infty}g*f_k(x),$$
where $(f_k)\subset L^1(\mathbb{R})$ such that $||f_k-f||_A\rightarrow 0$, as $k\rightarrow \infty$.
Talvila \cite[Theorem 2.1, Theorem 3.4]{A22} proved the following result.
\begin{theorem} \label{the3} Let $(g,f)\in  BV\times\mathcal{A}_C$. Then
\begin{itemize}
\item[(i)]  $g*f(x)$ exists for each  $x\in\mathbb{R}$ and $g*f$ belongs to $C^0(\overline{\mathbb{R}})$;
\item[(ii)]  $f*g(x)=g*f(x)$;
\item[(iii)]  $||g*f||_\infty\leq ||f||_A||g||_{BV}$ where  $||g||_{BV}:=Vg+|g(-\infty)|$.
\end{itemize}
  Moreover, if $(g,f)\in L^1(\mathbb{R})\times \mathcal{A}_C$ and $h\in L^1(\mathbb{R})$, then
\begin{itemize}
\item[(i')] $g*f\in \mathcal{A}_c$;
\item[(ii')] $||g*f||_A\leq ||f||_A||g||_1$;
\item[(iii')]  $h*(g*f)(x)=(h*g)*f(x)$.
\end{itemize}
\end{theorem}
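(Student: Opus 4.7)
The common engine is the H\"older inequality for the distributional Henstock-Kurzweil integral, $\bigl|\int_{-\infty}^{\infty}fg\bigr|\leq\|f\|_A\|g\|_{BV}$ (see \cite[Theorem 7]{A24}), combined with the density of $L^1(\mathbb{R})$ in $(\mathcal{A}_C,\|\cdot\|_A)$. I will also use that $g\in BV(\mathbb{R})$ produces a finite signed measure $dg$ with total mass bounded by $Vg$, and that $F\in\mathcal{B}_C$ is bounded, continuous and has limits at $\pm\infty$.

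For the first block, fix $x\in\mathbb{R}$ and put $h_x(y):=g(x-y)$. Reflection and translation preserve total variation, so $h_x\in BV$ with $Vh_x=Vg$ and $h_x(-\infty)=g(\infty)$; in particular $\|h_x\|_{BV}$ is comparable to $\|g\|_{BV}$. Hence $\int h_x(y)f(y)\,dy$ exists in the distributional Henstock-Kurzweil sense, which is exactly how we define $g*f(x)$, giving existence in (i) and bound (iii) directly from H\"older. For the continuity and the asymptotics, I write $f=F'$ with $F\in\mathcal{B}_C$ and integrate by parts (legitimate since $F$ is continuous with $F(-\infty)=0$ and $h_x\in BV$):
\[ g*f(x)=g(-\infty)F(\infty)-\int_{-\infty}^{\infty}F(y)\,dh_x(y). \]
Changing variable $u=x-y$ in the Stieltjes integral transforms this into an expression of the form $\int F(x-u)\,dg(u)$ (modulo a sign coming from Jordan decomposition of $g$), and standard dominated convergence against the finite measure $dg$, using that $F$ is bounded with limits at $\pm\infty$, yields continuity of $x\mapsto g*f(x)$ together with existence of $\lim_{x\to\pm\infty}g*f(x)$. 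Assertion (ii) is the substitution $y\mapsto x-y$, valid for $f\in L^1$ by Fubini and then for $f\in\mathcal{A}_C$ by approximation in $\|\cdot\|_A$ since both sides are continuous in $f$ in this norm by (iii).

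For the $L^1\times\mathcal{A}_C$ part, I begin with (ii$'$). When $f\in L^1$, Fubini applied with the primitive $G(u):=\int_{-\infty}^u g(t)\,dt$ gives $\int_{-\infty}^x(g*f)(t)\,dt=\int_{-\infty}^{\infty}f(y)\,G(x-y)\,dy$, and H\"older bounds this by $\|f\|_A\,\|G(x-\cdot)\|_{BV}\leq\|f\|_A\|g\|_1$; taking $\sup_x$ delivers (ii$'$) on $L^1$ and density extends it to $\mathcal{A}_C$. For (i$'$), choose $f_k\in L^1$ with $\|f_k-f\|_A\to 0$; by (ii$'$) the sequence $\{g*f_k\}\subset L^1\subset\mathcal{A}_C$ is Cauchy in $\|\cdot\|_A$, so it converges in $\mathcal{A}_C$, and this limit is $g*f$ by definition. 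Finally (iii$'$) is classical Fubini for $h,g,f_k\in L^1$, followed by passing to the limit through (ii$'$) applied to $h$ and to $g$ separately.

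The main obstacle I foresee is continuity of $g*f$ at points where $g$ has jumps: the naive estimate $|g*f(x+\delta)-g*f(x)|\leq\|f\|_A\|h_{x+\delta}-h_x\|_{BV}$ fails because translation is not continuous in the $BV$ norm. The integration-by-parts representation above is what rescues it, moving the variation from the irregular $g$ onto the continuous, bounded $F$ and reducing the question to a dominated-convergence argument against a finite signed measure.
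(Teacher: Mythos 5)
This theorem is quoted in the paper as a known result of Talvila \cite[Theorem 2.1, Theorem 3.4]{A22}; the paper gives no proof of its own, so there is nothing internal to compare against. Your argument is essentially the one in that reference: existence and the sup-norm bound from the H\"older inequality applied to the translated reflection $h_x(y)=g(x-y)$, continuity and the limits at $\pm\infty$ by integrating by parts onto the continuous primitive $F\in\mathcal{B}_C$ and then using dominated convergence against the finite measure $dg$, and the $L^1(\mathbb{R})\times\mathcal{A}_C$ block by Fubini on the dense subclass $L^1$ followed by a Cauchy-sequence argument in the Alexiewicz norm; all of this is sound. The one point to tighten is the constant in (iii): since $h_x(-\infty)=g(+\infty)$, the crude bound via $\|h_x\|_{BV}$ yields $Vg+|g(+\infty)|$ rather than $Vg+|g(-\infty)|$, so to obtain exactly $\|f\|_A\bigl(Vg+|g(-\infty)|\bigr)$ you should invoke the two-term form of H\"older's inequality, $\bigl|\int fg\bigr|\le\bigl|\int f\bigr|\,\inf|g|+\|f\|_A\,Vg$, together with $\bigl|\int f\bigr|\le\|f\|_A$ and $\inf|g|\le|g(-\infty)|$.
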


Note that,  any $f\in D_{HK}$ can be considered as an element in $\mathcal{A}_C$, because of its primitive $F\in C_0$ can be continuously  extended to a function in $C^0(\overline{\mathbb{R}})$.

\section{Riemann-Liouville fractional integral operator on $D_{HK}$}
In this section, we extend the Riemann-Liouville fractional integral operator over Henstock-Kurzweil integrable distributions and we prove fundamental properties, including the semigroup property.

In accordance with the convolution definition we set the following definition.
\begin{definition}\label{def n>1} Let $n\in\mathbb{R}^+\cup\{0\} $,  $f\in D_{HK}$ and \begin{equation}
\phi_n(u):=\left \{ \begin{matrix} u^{n-1} & \mbox{if \ } \mbox{} 0<u\leq b-a,
\\ 0 & \mbox{ }\mbox{ else.}\end{matrix}\right.\label{phi11}
\end{equation} The\textit{ Riemann-Liouville fractional integral operator of order $n$} is defined as
$$\mathcal{J}_a^nf(x):=\frac{1}{\Gamma(n)}\phi_n*f(x),$$
for $n\geq 1$ $$\phi_n*f(x):=\int_a^x(x-t)^{n-1}f(t)\,dt,$$
and for $0<n<1$ $$\phi_n*f(x):=\lim_{k\rightarrow\infty}\int_a^x(x-t)^{n-1}f_k(t)\,dt,$$whereby  $a\leq x\leq b$, $(f_k)\subset L^1[a,b]$ such that $||f_k-f||_A\rightarrow 0$, as $k\rightarrow \infty$. For $n=0$, we set $\mathcal{J}_a^0f:=I$, the identity operator.
\end{definition}

 \begin{remark} Note that if $n\geq 1$, then $\phi_n$ is increasing, non-negative and bounded on $[0,b-a]$. Thus, $\phi_n$  is a function of bounded variation on $\mathbb{R}$.  When $0<n<1$, the function $\phi_n$  belongs to  $L^1(\mathbb{R})$, but it is not a bounded variation function (is unbounded at $u=0$).  Observe that if  $f$ is in $L^1[a,b]$, then   $\mathcal{J}_a^nf(x)=J_a^nf(x)$,
since the distributional Henstock-Kurzweil integral contains the Lebesgue integral. Via the H\"older inequality, \cite{A24} and  \cite{A26}, it is easy to see that $\mathcal{J}_a^nf$ is a temperate distribution for any $f\in D_{HK}$ and $n\geq 0$.
 \end{remark}


In case $(a,b)=\mathbb{R}$,  the fractional integral over the real axis is defined analogously, see \cite{A20}. Note that for any $n>0,$ $\phi_n$ is not Lebesgue integrable  nor bounded on $\mathbb{R}$. Thus, convolution definition (Definition \ref{def n>1}) is not suitable for $f\in \mathcal{A}_c$ and $\phi_n$. On the other hand, $\phi_n\in L^1_{loc}(\mathbb{R})$, then $\phi_n$ defines a regular distribution,
$$\langle T_{\phi_n},\phi\rangle=\int_{-\infty}^\infty \phi_n(x)\phi(x)dx,$$ $\phi\in \mathcal{D}(\mathbb{R})$. The convolution of two distributions $S$ and $T$ is defined as
\begin{equation}\label{defi conv distri}
 \langle S*T,\phi\rangle=\langle S(\phi),\langle T(x),\phi(x-y)\rangle\rangle,
\end{equation}
see for example \cite{A8}. However, the inner pairing $\langle T(x),\phi(x-y)\rangle$ produces a function of $y$ which might not be
a test function.  Then the distributions must satisfy one
of the following conditions: (a) either $S$ or $T$ has bounded support,
(b) the supports of $S$ and $T$ are bounded on the same side. In the fractional integral case, these constraints force the distribution $f$ to have compact support. The Riemann-Liouville fractional derivative defined as convolution of generalized functions with the supports bounded on the same side is studied in \cite{A14}. Then, we will analyze the Riemann-Liouville fractional integral in the sense of the distributional Henstock-Kurzweil integral, according to Definition \ref{def n>1}.

Now, we will prove some fundamental properties of Riemann-Liouville fractional integrals.

\begin{theorem}\label{coro1} Let $n\in\mathbb{R}^+\cup\{0\} $, $f\in D_{HK}$ and $\mathcal{J}_a^nf(x)$ as in the Definition \ref{def n>1}.  Then,
\begin{enumerate}
\item [(i)]  $\mathcal{J}_a^n:D_{HK}\rightarrow D_{HK}$;
\item [(ii)] $\mathcal{J}_a^n$ is a bounded linear operator with respect to the Alexiewicz norm;

\item [(iii)] for $(f_k)\subset D_{HK}$ which converges in the Alexiewicz norm to $f$, we have that $(\mathcal{J}_a^nf_k)$ convergences in the Alexiewicz norm to $\mathcal{J}_a^nf$.

\item [(iv)] Moreover, if  $n\geq 1$, then   $$\mathcal{J}_a^nf(x)=\frac{1}{\Gamma(n)}\lim_{k\rightarrow\infty}\phi_n*f_k(x)$$ on $D_{HK}$ and as well on  $C[a,b]$, where $(f_k)\subset L^1[a,b]$ such that $||f_k-f||_A\rightarrow 0$, as $k\rightarrow\infty$.
\end{enumerate}
\end{theorem}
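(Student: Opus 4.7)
The plan is to prove all four statements in parallel by splitting on the regularity of the kernel $\phi_n$, since Definition \ref{def n>1} treats $n \geq 1$ (where $\phi_n$ has bounded variation) and $0 < n < 1$ (where $\phi_n \in L^1(\mathbb{R})$ only) separately. The case $n = 0$ is trivial because the identity is obviously a bounded linear operator on $D_{HK}$. For $n \geq 1$ I would view $f \in D_{HK}$ as an element of $\mathcal{A}_C$ by extending its primitive $F \in C_0$ continuously to $\mathbb{R}$ (declaring it to be $0$ left of $a$ and $F(b)$ right of $b$); since $\phi_n$ is bounded and monotone on $[0,b-a]$ and vanishes elsewhere, $\phi_n \in BV$. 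Then Theorem \ref{the3}(i) gives $\phi_n * f \in C^0(\overline{\mathbb{R}})$, so $\mathcal{J}_a^n f$ is continuous on $[a,b]$ and in particular lies in $D_{HK}$, which settles (i). Part (iii) of Theorem \ref{the3} yields $\|\phi_n * f\|_\infty \leq \|f\|_A \|\phi_n\|_{BV}$, and combining with the elementary bound $\|g\|_A \leq (b-a)\|g\|_\infty$ for continuous $g$ produces boundedness in the Alexiewicz norm, establishing (ii) in this range.

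For $0 < n < 1$ the kernel $\phi_n$ is only in $L^1(\mathbb{R})$, so one must use the definition of $\mathcal{J}_a^n f$ through a limit along $(f_k) \subset L^1[a,b]$ with $\|f_k - f\|_A \to 0$; such a sequence exists because $L^1[a,b]$ is dense in $D_{HK}$. Theorem \ref{the3}(ii') gives
\[
\|\phi_n * f_k - \phi_n * f_j\|_A \leq \|\phi_n\|_1\,\|f_k - f_j\|_A,
\]
so $(\phi_n * f_k)$ is Cauchy in the Alexiewicz norm and converges in $D_{HK}$ by its completeness, with Theorem \ref{the3}(i') ensuring each term lies in $\mathcal{A}_C$. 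The same estimate shows that the limit is independent of the approximating sequence, so the operator is well defined; passing to the limit in $\|\phi_n * f_k\|_A \leq \|\phi_n\|_1\,\|f_k\|_A$ delivers (ii) in this range as well.

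Assertions (iii) and (iv) then fall out cleanly. Claim (iii) is immediate from the linearity of convolution together with the operator bounds above: $\|\mathcal{J}_a^n f_k - \mathcal{J}_a^n f\|_A \leq C_n \|f_k - f\|_A \to 0$. For (iv) with $n \geq 1$, applying Theorem \ref{the3}(iii) to the difference produces
\[
\|\phi_n * f - \phi_n * f_k\|_\infty \leq \|\phi_n\|_{BV}\,\|f - f_k\|_A \longrightarrow 0,
\]
so the convergence is uniform on $[a,b]$, hence holds in $C[a,b]$; the Alexiewicz norm is dominated by the sup norm on continuous functions, so it holds in $D_{HK}$ too.

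The main obstacle is a bookkeeping one: Theorem \ref{the3} is stated for convolutions on all of $\mathbb{R}$, whereas $\mathcal{J}_a^n$ acts on the compact interval $[a,b]$. I would need to make the embedding $D_{HK} \hookrightarrow \mathcal{A}_C$ explicit via continuous extension of primitives, verify that restricting $\phi_n * f$ to $[a,b]$ reproduces the concrete integral formula in Definition \ref{def n>1}, and check that the Alexiewicz norms on $[a,b]$ and on $\mathbb{R}$ are compatible for the kernel $\phi_n$ supported in $[0,b-a]$. Once this identification is in place the four assertions follow from the cited convolution results essentially mechanically.
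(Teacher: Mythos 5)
Your proposal is correct and follows essentially the same route as the paper: the same split into $n=0$, $n\geq 1$ (where $\phi_n\in BV$ and Theorem \ref{the3}(i),(iii) give membership in $C_0$ and the sup-norm bound, hence the Alexiewicz bound after multiplying by $b-a$), and $0<n<1$ (where Theorem \ref{the3}(i'),(ii') control the limiting convolution), with (iii) following from boundedness of the linear operator and (iv) from the two estimates $\|\phi_n*f-\phi_n*f_k\|_A\leq\|f-f_k\|_A\|\phi_n\|_1$ and $\|\phi_n*f-\phi_n*f_k\|_\infty\leq\|f-f_k\|_A\|\phi_n\|_{BV}$. Your added care about well-definedness of the limit for $0<n<1$ and the embedding of $D_{HK}$ into $\mathcal{A}_C$ only makes explicit what the paper leaves implicit.
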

\begin{proof}
In the case $n=0$, claims $i),ii)$ and $iii)$ are trivial. Let $n\geq 1$ be fixed and $f\in D_{HK}$. By Theorem \ref{the3} we have that $\mathcal{J}_a^nf(x)$ exists for every $x\in [a,b]$, and  $\mathcal{J}_a^nf$ is  an element in $C_0$, thus $\mathcal{J}_a^n:D_{HK}\rightarrow C_0\subset D_{HK}$.  Now we will show that is a bounded linear operator with respect to the Alexiewicz norm. By Theorem \ref{the3} $(iii)$
\begin{eqnarray}
\left| \int_a^y\mathcal{J}_a^n f(x)dx \right|
&\leq&\int_a^y||\mathcal{J}_a^nf||_\infty dx\nonumber\\
&\leq&(y-a)\frac{1}{\Gamma(n)}||f||_A||\phi_n||_{BV}.\nonumber
\end{eqnarray}
Taking supreme when $y\in [a,b]$ we get
$$||\mathcal{J}_a^n f||_A\leq \frac{(b-a)}{\Gamma(n)}||f||_A||\phi_n||_{BV}.$$ Now let  $0<n<1$ be fixed. From Definition \ref{def n>1} and Theorem \ref{the3} we have $\phi_n*f\in \mathcal{A}_c$. Since $f$ and $\phi_n$ have compact support, we get that $\mathcal{J}_a^n:D_{HK}\rightarrow D_{HK}$.  It is clear that for any $n\in \mathbb{R}^+$, $\mathcal{J}_a^n$ is a linear operator  on $D_{HK}$,  because of the linearity of the integral.
By \cite[Theorem 1.32 ]{A18} we have $(iii)$.

 In the case $n\geq 1$, $\phi_n$  is of bounded variation
on $[0,b-a]$  and belongs to $L^1(\mathbb{R})$. Let us take a sequence $(f_k)$ in $L^1[a,b]$ such that $||f_k-f||_A\rightarrow 0$, as $k\rightarrow\infty$. By Theorem \ref{the3} $(iii)$ and $(i')$, we have
\[||\phi_n*f-\phi_n*f_k||_A\leq ||f-f_k||_A||\phi_n||_{1}\quad \text{ and } \] $$||\phi_n*f-\phi_n*f_k||_\infty\leq ||f-f_k||_A||\phi_n||_{BV}.$$Therefore,  $(iv)$ holds.
\end{proof}
 We will prove the semigroup property for the Riemann-Liouville fractional  integral operators.
\begin{theorem}\label{semigroup} Let $m,n\in\mathbb{R}^+\cup\{0\} $ and $f\in D_{HK}$. Then
\begin{itemize}
\item[(i)] $\mathcal{J}_a^m\mathcal{J}_a^nf=\mathcal{J}_a^{m+n}f$ in $D_{HK}$; Moreover, if $m\geq 1$ or $n\geq 1$, then the identity holds everywhere in $C[a,b]$;
\item[(ii)] $\mathcal{J}_a^m\mathcal{J}_a^nf=\mathcal{J}_a^n\mathcal{J}_a^mf$ in $D_{HK}$;
\item[(iii)] the set $\{\mathcal{J}_a^n:D_{HK}\rightarrow D_{HK}, n\geq 0\}$ forms a commutative semigroup with respect to concatenation. The identity operator $\mathcal{J}_a^0$ is the neutral element of this semigroup.
\end{itemize}
\end{theorem}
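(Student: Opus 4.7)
My plan is to prove (i) by a density-and-continuity argument, reduce (ii) to (i) by symmetry, and assemble (iii) from the previous parts together with Theorem \ref{coro1}(i). Since $\mathcal{J}_a^0=I$, the cases $m=0$ or $n=0$ are trivial, so from now on I assume $m,n>0$.

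For part (i) I would invoke density: because $L^1[a,b]\subset D_{HK}$ and convergence in Alexiewicz norm is precisely how $\mathcal{J}_a^n$ is built in Definition \ref{def n>1}, I fix a sequence $(f_k)\subset L^1[a,b]$ with $\|f_k-f\|_A\to 0$. On $L^1[a,b]$ the operator $\mathcal{J}_a^n$ agrees with the classical Riemann--Liouville integral $J_a^n$ (remark after Definition \ref{def n>1}), so the classical semigroup identity, proved by Fubini together with the beta-function evaluation
\[
\int_s^x(x-t)^{m-1}(t-s)^{n-1}\,dt=\frac{\Gamma(m)\Gamma(n)}{\Gamma(m+n)}(x-s)^{m+n-1},
\]
yields $\mathcal{J}_a^m\mathcal{J}_a^n f_k=\mathcal{J}_a^{m+n}f_k$ for every $k$. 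Now I apply Theorem \ref{coro1}(iii) twice on the left (first to obtain $\mathcal{J}_a^n f_k\to\mathcal{J}_a^n f$ in $\|\cdot\|_A$, then again after applying $\mathcal{J}_a^m$) and once on the right. Both limits exist in the Banach space $D_{HK}$, they coincide with $\mathcal{J}_a^m\mathcal{J}_a^n f$ and $\mathcal{J}_a^{m+n}f$ respectively, and uniqueness of limits yields the identity in $D_{HK}$.

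For the \emph{moreover} clause I would upgrade the mode of convergence via Theorem \ref{coro1}(iv). If $n\geq 1$, then $\mathcal{J}_a^n f_k\to \mathcal{J}_a^n f$ uniformly on $[a,b]$; since $\mathcal{J}_a^n f\in C[a,b]\subset L^1[a,b]$, an elementary estimate on $\frac{1}{\Gamma(m)}\int_a^x(x-t)^{m-1}(g_k-g)(t)\,dt$ shows that $\mathcal{J}_a^m$ sends uniformly convergent sequences of continuous functions to uniformly convergent continuous sequences for any $m>0$, so the left side converges uniformly to $\mathcal{J}_a^m\mathcal{J}_a^n f$; the right side $\mathcal{J}_a^{m+n}f_k$ converges uniformly to $\mathcal{J}_a^{m+n}f$ by Theorem \ref{coro1}(iv) since $m+n\geq 1$. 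The case $m\geq 1$ with $0<n<1$ is symmetric: Alexiewicz convergence $\mathcal{J}_a^n f_k\to \mathcal{J}_a^n f$ from Theorem \ref{coro1}(iii) together with $m\geq 1$ in Theorem \ref{coro1}(iv) upgrades the composed sequence to uniform convergence. Hence the identity holds everywhere on $[a,b]$. Part (ii) is immediate from (i) by interchanging $m$ and $n$, and part (iii) then assembles: closure under composition is Theorem \ref{coro1}(i); associativity is automatic for composition of operators on $D_{HK}$; commutativity is (ii); and $\mathcal{J}_a^0=I$ is the neutral element by Definition \ref{def n>1}.

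The main technical obstacle is the case $0<n<1$, where $\phi_n$ fails to be of bounded variation, so Theorem \ref{the3}(iii) does not supply the convolution associativity $\phi_m*(\phi_n*f)=(\phi_m*\phi_n)*f$ directly on $D_{HK}$, and one cannot naively reduce to the kernel identity $\phi_m*\phi_n=(\Gamma(m)\Gamma(n)/\Gamma(m+n))\phi_{m+n}$. The density-and-continuity argument through Theorem \ref{coro1}(iii) is what sidesteps this subtlety, transferring the classical $L^1$ semigroup law to the whole of $D_{HK}$.
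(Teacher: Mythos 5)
Your proposal is correct and follows essentially the same route as the paper: approximate $f$ by an $L^1$ sequence in the Alexiewicz norm, invoke the classical Riemann--Liouville semigroup identity on $L^1[a,b]$, and pass to the limit using the continuity of $\mathcal{J}_a^n$ from Theorem \ref{coro1}(iii), upgrading to pointwise/uniform equality via Theorem \ref{coro1}(iv) when $m\geq 1$ or $n\geq 1$. The only cosmetic difference is that you deduce (ii) directly from (i) by swapping $m$ and $n$, whereas the paper passes again through the a.e.\ commutativity on $L^1$ and continuity; both are fine.
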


\begin{proof}
Let $n\in\mathbb{R}^+$, $f\in D_{HK}$. By Theorem \ref{coro1} $(iii)$  we have
$$\lim_{k\rightarrow\infty}\mathcal{J}_a^nf_k=\mathcal{J}_a^nf,$$
where $(f_k)\subset L^1[a,b]$ such that $||f-f_k||_A\rightarrow 0$, as $k\rightarrow \infty$.
For any $n,m\in\mathbb{R}^+$,
and for each $k\in\mathbb{N}$ we have
\begin{equation}
\mathcal{J}_a^m\mathcal{J}_a^nf_k(x)=\mathcal{J}_a^{m+n}f_k(x)\quad \text{a.e. on \ } [a,b], \label{3.2}
\end{equation}  and
\begin{equation}\mathcal{J}_a^m\mathcal{J}_a^nf_k(x)=\mathcal{J}_a^n\mathcal{J}_a^mf_k(x)\quad \text{a.e. on \ } [a,b],\label{3.3}
\end{equation} see \cite{A6}. Since  the composition of bounded operators is bounded, by (\ref{3.2}) we have
\[\mathcal{J}_a^m\mathcal{J}_a^nf=\mathcal{J}_a^m\lim_{k\rightarrow\infty}\mathcal{J}_a^nf_k=\lim_{k\rightarrow\infty}\mathcal{J}_a^m\mathcal{J}_a^nf_k=\lim_{k\rightarrow\infty}\mathcal{J}_a^{m+n}f_k= \mathcal{J}_a^{m+n}f\quad \text{in } D_{HK}.\]
If $n\geq 1$, then we have $\mathcal{J}_a^nf,\mathcal{J}_a^{m+n}f,\mathcal{J}_a^m\mathcal{J}_a^nf\in C[a,b] $. By Theorem \ref{coro1} $(iv)$
\[\mathcal{J}_a^m\mathcal{J}_a^nf(x)=\mathcal{J}_a^{m+n}f(x)\ \text{ on } [a,b] \]
and we obtain $(i).$
Claim $(ii)$ follows from (\ref{3.3}) and the continuity condition (Theorem \ref{coro1} $(iii)$)
\[\mathcal{J}_a^m\mathcal{J}_a^nf=\lim_{k\rightarrow\infty}\mathcal{J}_a^{m}\mathcal{J}_a^{n}f_k=\lim_{k\rightarrow\infty}\mathcal{J}_a^n\mathcal{J}_a^mf_k=\mathcal{J}_a^n\mathcal{J}_a^mf\quad \text{in } D_{HK}.\]
Finally $(iii)$ follows from the associative property (Theorem \ref{the3} $(ii')$).
\end{proof}

  Analogously, let us define the right-sided  \textit{Riemann-Liouville fractional integral of order $n$} on $D_{HK}$.

   \begin{definition}Let $n\in\mathbb{R}^+\cup\{0\} $,   $f\in D_{HK}$ and
 \begin{eqnarray}\psi_n(u) &:=& \left \{ \begin{matrix} (-u)^{n-1} & \mbox{if  }\  0< -u\leq b-a,
\\ 0 & \mbox{else. }\ \end{matrix}\right.\end{eqnarray}
The \textit{ right-side fractional integral of order $n$}  is    $$\mathcal{J}_{b_-}^nf(x):=\frac{1}{\Gamma(n)}\psi_n*f(x),$$
for $n\geq 1$, $$\psi_n*f(x):=\int^{b}_x(t-x)^{n-1}f(t)\,dt,
$$
and for $0<n<1$, $$\psi_n*f(x):=\lim_{k\rightarrow\infty}\int^{b}_x(t-x)^{n-1}f_k(t)\,dt,
$$
whereby $a\leq x\leq b$,  $(f_k)\subset L^1[a,b]$ such that $||f_k-f||_A\rightarrow 0$, as $k\rightarrow\infty$. For $n=0$, we set $\mathcal{J}_{b_-}^0:=I$, the identity operator.
\end{definition}


\begin{remark}It is clear that if $n\geq 1$, then $\psi_n$ is of bounded variation and belongs to $L^1(\mathbb{R})$. Thus, $\mathcal{J}_{b_-}^n:D_{HK}\rightarrow C_0\subset D_{HK}$, and for any $f\in D_{HK}$ $$\mathcal{J}_{b_-}^nf(x)=\lim_{k\rightarrow\infty}\mathcal{J}_{b_-}^nf_k(x),$$  in $D_{HK}$, where $(f_k)\subset L^1[a,b]$ such that $(f_k)$ converges in Alexiewicz norm to $f$. Analogously, we have   $||\mathcal{J}_{b_-}^nf||_\infty\leq ||f||_A||\psi_n||_{BV}/\Gamma(n)$, when $n\geq 1$. Moreover, we get that $\mathcal{J}^n_{b_-}$ is a bounded linear operator of $D_{HK}$ into $D_{HK}$, because of $||\mathcal{J}_{b_-}^nf||_A\leq ||f||_A||\psi_n||_{BV}(b-a)/\Gamma(n)$. If $0<n<1$, then $\psi_n\in L^1(\mathbb{R})$, and  $\mathcal{J}_{b_-}^n:D_{HK}\rightarrow D_{HK}$; by Theorem \ref{the3} we get that $\mathcal{J}_{b_-}^n$ is a bounded linear operator with respect to the Alexiewicz norm. Similarly, the semigroup property for the operators $\mathcal{J}_{b_-}^n$ is obtained.
\end{remark}

\section{Riemann-Liouville fractional differential operator on $D_{HK}$}

Now, we will extend the Riemann-Liouville differential operator  (Definition \ref{difoperator}) in distributional sense to get new fundamental properties between the fractional integral and differential operators. Moreover, we shall prove the fundamental theorem of fractional calculus on the space  $D_{HK}$.
\begin{definition}\label{def 4.1} Let $n\in\mathbb{R}^+\cup\{0\} $, $m:=\lceil n \rceil$ and $f\in D_{HK}$. The \textit{ Riemann-Liouville fractional differential operator of order $n$} is
$$\mathcal{D}_a^nf:=D^m\mathcal{J}_a^{m-n}f,$$
 where $D^m$ denotes the $m$-fold iterates of the distributional derivative. For $n=0$, we set $\mathcal{D}_a^0:=I$, the identity operator.
\end{definition}
\begin{remark} Observe that the operator $\mathcal{D}_a^n$ is well defined, since $\mathcal{J}_a^{m-n}f\in D_{HK}$, and  the distributional derivative of a distribution is a distribution, see \cite{A11}. Therefore, for any $n\in\mathbb{R}^+$, $$\mathcal{D}_a^n:D_{HK}\rightarrow \mathcal{D}'(a,b).$$
\end{remark}

\begin{remark} The Caputo derivative on $\mathbb{R}$ is given as
\begin{equation}\label{perpec1}^C D^nf(x):=\int_{-\infty}^x\frac{(x-t)^{m-n-1}}{\Gamma(m-n)}f^{(m)}(t)dt,
\end{equation} where $m:=\lceil n \rceil$. According to the convolution definition for distribution (\ref{defi conv distri}), if $f$ is any distribution with bounded support,
the Caputo and  Riemann-Liouville derivatives define the same functional in distributional sense, see  \cite{A14}; in particular it is valid when $f$ is induced by a function in $C^\infty[a,b]$.
In our study any distribution $f$ in $D_{HK}$  is considered. This means that $f$ might not be induced by a locally Lebesgue integrable function. 
\end{remark}

We will show that for any $n\in\mathbb{R}^+$ and $f\in D_{HK}$, the Riemann-Liouville integral operator can be written via
 the primitive  of $f$. This is a new property  even for Lebesgue integrable functions.
\begin{theorem}\label{prop2} Let $n\in\mathbb{R}^+\cup\{0\} $ and  $f\in D_{HK}$. Then, \begin{equation} \mathcal{J}_a^{n}f=D(\mathcal{J}_a^{n}F),
\label{7}
\end{equation}
where $F\in C_0$ and is the primitive of $f$.
In consequence, for $j\in \mathbb{N}$ and $\phi\in \mathcal{D}(a,b)$, then  \begin{equation}\langle D^{j}(\mathcal{J}_a^{n}f), \phi\rangle =(-1)^{j+1}\langle \mathcal{J}_a^{n}F, \phi^{(j+1)}\rangle.\label{8}
\end{equation}   Moreover, if $m=\lceil n \rceil$, then\begin{equation}\label{10}
\mathcal{D}^{n}_af= D^{m+1}\mathcal{J}^{m-n}F.
\end{equation}
For $0<n<1$, $\mathcal{D}^n_aF$ is a temperate distribution and
\begin{equation}\mathcal{D}^n_aF=\mathcal{J}_a^{1-n}f.\label{9}\end{equation}

\end{theorem}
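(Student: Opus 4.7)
The plan is to derive \eqref{7} by a short chain of identities built from the semigroup property (Theorem \ref{semigroup}) and the distributional fundamental theorem of calculus (Theorem \ref{FTCdis}), then to read off \eqref{8}, \eqref{10}, and \eqref{9} from \eqref{7} by simple algebraic manipulation. The observation that unlocks the proof is that the primitive $F\in C_0$ of $f$ is precisely $\mathcal{J}_a^1 f$: since $\phi_1 \equiv 1$ on $(0,b-a]$ and $\Gamma(1)=1$, Definition \ref{def n>1} gives $\mathcal{J}_a^1 f(x)=\int_a^x f = F(x)$.

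For \eqref{7} with $n>0$, I would chain the semigroup identity twice,
\[
\mathcal{J}_a^n F \;=\; \mathcal{J}_a^n\mathcal{J}_a^1 f \;=\; \mathcal{J}_a^{n+1} f \;=\; \mathcal{J}_a^1\mathcal{J}_a^n f,
\]
the latter two equalities coming from Theorem \ref{semigroup}(i), with the equality holding pointwise on $C[a,b]$ since one factor always has exponent $1 \ge 1$. Now $\mathcal{J}_a^1\mathcal{J}_a^n f(x)=\int_a^x \mathcal{J}_a^n f$ is the unique $C_0$-primitive of $\mathcal{J}_a^n f \in D_{HK}$, so Theorem \ref{FTCdis}(i) delivers $D\mathcal{J}_a^n F = \mathcal{J}_a^n f$ in $\mathcal{D}'(a,b)$; the case $n=0$ is just the defining relation $DF=f$. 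Equation \eqref{8} then follows by applying $D^j$ to both sides of \eqref{7} and moving all $j+1$ derivatives onto the test function,
\[
\langle D^j\mathcal{J}_a^n f,\phi\rangle = \langle D^{j+1}\mathcal{J}_a^n F,\phi\rangle = (-1)^{j+1}\langle\mathcal{J}_a^n F,\phi^{(j+1)}\rangle.
\]

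For \eqref{10}, I substitute $n\mapsto m-n$ in \eqref{7} to obtain $\mathcal{J}_a^{m-n} f = D\mathcal{J}_a^{m-n} F$ and compose with $D^m$: by Definition \ref{def 4.1}, $\mathcal{D}_a^n f = D^m\mathcal{J}_a^{m-n} f = D^{m+1}\mathcal{J}_a^{m-n} F$. For \eqref{9}, when $0<n<1$ we have $m=\lceil n\rceil = 1$, so Definition \ref{def 4.1} applied to $F$ reads $\mathcal{D}_a^n F = D\mathcal{J}_a^{1-n} F$; applying \eqref{7} with $n$ replaced by $1-n$ yields $D\mathcal{J}_a^{1-n} F = \mathcal{J}_a^{1-n} f$, and the temperate-distribution assertion is the remark after Definition \ref{def n>1}. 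The main obstacle is essentially notational: once one sees $F=\mathcal{J}_a^1 f$ and invokes the semigroup, the proof becomes bookkeeping, with the underlying technical subtleties (pointwise versus $D_{HK}$-equality in the regime $0<n<1$, continuity of $\mathcal{J}_a^n$ in the Alexiewicz norm, and continuity of $D$ on $\mathcal{D}'$) absorbed into the theorems already proved.
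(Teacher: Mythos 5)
Your proof is correct, but it takes a genuinely different route from the paper. The paper proves \eqref{7} by brute force at the level of test functions: it expands $\langle\mathcal{J}_a^nf,\phi\rangle$ via an approximating sequence $(f_k)\subset L^1[a,b]$, applies Fubini and an integration by parts on the kernel $(x-t)^{n-1}$ to transfer everything onto the primitives $F_k$, shows $F_k\to F$ uniformly and hence in the Alexiewicz norm, and then invokes the continuity of $\mathcal{J}_a^n$ (Theorem \ref{coro1}(iii)) and Theorem \ref{convergence} to pass to the limit and obtain $\langle\mathcal{J}_a^nf,\phi\rangle=-\langle\mathcal{J}_a^nF,\phi'\rangle$. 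You instead observe that $F=\mathcal{J}_a^1f$ and reduce \eqref{7} to the chain $\mathcal{J}_a^nF=\mathcal{J}_a^{n+1}f=\mathcal{J}_a^1\mathcal{J}_a^nf$ followed by Theorem \ref{FTCdis}(i); the derivations of \eqref{8}, \eqref{10} and \eqref{9} from \eqref{7} then agree with the paper's. Your argument is shorter and more conceptual --- it exposes \eqref{7} as the semigroup law combined with $D\mathcal{J}_a^1=I$ --- and it cleanly delegates all the limit-taking to Theorems \ref{semigroup} and \ref{FTCdis}, which are proved earlier, so there is no circularity. One small point to make explicit: you use the ``everywhere in $C[a,b]$'' clause of Theorem \ref{semigroup}(i) with the \emph{outer} exponent equal to $1$, whereas the displayed proof of that clause only treats the case where the inner exponent is $\geq 1$; this is harmless because both $\mathcal{J}_a^1\mathcal{J}_a^nf$ and $\mathcal{J}_a^{n+1}f$ are continuous and equality in $D_{HK}$ of continuous functions is pointwise equality, but it deserves a sentence. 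What the paper's longer computation buys in exchange is the intermediate identity $\langle\mathcal{J}_a^nf,\phi\rangle=\lim_k\int_a^bf_k\,\mathcal{J}_{b_-}^n\phi$, which is recycled almost verbatim in the proofs of Lemma \ref{lemma4} and the integration-by-parts formula \eqref{22}; your route does not produce that byproduct.
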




\begin{proof}
The case $n=0$ is trivial. Let $n\in\mathbb{R}^+$ be fixed, $f\in D_{HK}$ and $\phi\in \mathcal{D}(a,b)$. By Definition \ref{def n>1} and Theorem \ref{coro1} $(iii)$ we have
\begin{eqnarray}
\langle \mathcal{J}_a^{n}f, \phi\rangle &:=& \int_a^b\mathcal{J}_a^{n}f(x)\phi(x)\,dx\nonumber\\
&=&\frac{1}{\Gamma(n)}\int_a^b\int_a^x(x-t)^{n-1}f(t)\,dt\,\phi(x)\,dx\nonumber\\&=&
\frac{1}{\Gamma(n)}\int_a^b\lim_{k\rightarrow\infty}\int_a^x(x-t)^{n-1}f_k(t)\phi(x)\,dt\,dx,\nonumber
\end{eqnarray}
where $(f_k)\subset L^1[a,b]$ such that $||f_k-f||_A\rightarrow\infty$, as $k\rightarrow\infty$. Since $ \mathcal{J}_a^{n}f_k$ converges in the Alexiewicz norm to $ \mathcal{J}_a^{n}f$ (Theorem \ref{coro1}), by Theorem \ref{convergence}, the Fubini Theorem 
 and integration by parts we have
$$\frac{1}{\Gamma(n)}\int_a^b\lim_{k\rightarrow\infty}\int_a^x(x-t)^{n-1}f_k(t)\phi(x)\,dt\,dx$$
\begin{eqnarray}&=&
\lim_{k\rightarrow\infty}\frac{1}{\Gamma(n)}\int_a^bf_k(t)\int_t^b(x-t)^{n-1}\phi(x)\,dx\,dt,\nonumber\\
&=&\lim_{k\rightarrow\infty}\frac{-1}{\Gamma(n)}\int_a^bf_k(t)\int_t^b \frac{(x-t)^n}{n}\phi'(x)\,dx\,dt\nonumber\\
&=&\lim_{k\rightarrow\infty}\frac{-1}{\Gamma(n)}\int_a^b\phi'(x)\int_a^xF_k(t)(x-t)^{n-1}\,dt\,dx\nonumber\\&=&-\lim_{k\rightarrow\infty}\int_a^b\phi'(x)\mathcal{J}_a^nF_k(x)\,dx.\nonumber
\end{eqnarray}
where $F_k(t)=\int_a^tf_k(r)\,dr$. Let $\varepsilon>0$, we have that  for  $k,k'$ large enough
\begin{equation*}
||F_k-F_{k'}||_\infty=||f_k-f_{k'}||_A<\varepsilon.
\end{equation*}
 Therefore, there exists $F\in C_0$ such that $\lim_{k\rightarrow\infty}F_k(x)=F(x)$ for $x\in [a,b]$ and  for a $k$ large enough we have $||F_k-F||_\infty<\varepsilon$. We will show that $||\mathcal{J}_a^nF_k-\mathcal{J}_a^nF||_A<\varepsilon$, for a $k$ large enough.

 Since $(F_k)$ and $F$ belong to $C_0$, we have
\begin{equation*}
||F_k-F||_A=\sup_{y\in[a,b]}\left|\int_a^yF_k(t)-F(t)dt\right|\leq\sup_{y\in[a,b]}\int_a^y||F_k-F||_\infty dt\leq\varepsilon(b-a),
\end{equation*}for a $k$ large enough, i.e. $F_k$ converges in the Alexiewicz norm to $F$. By the continuity property (Theorem \ref{coro1} $(iii)$) we get that $\mathcal{J}_a^nF_k$ converges to $\mathcal{J}_a^nF$ in the Alexiewicz norm as well.
Thus,   by  Theorem \ref{convergence}  we have
\begin{equation*}
\langle \mathcal{J}_a^{n}f, \phi\rangle = -\langle \mathcal{J}_a^{n}F, \phi'\rangle=\langle (\mathcal{J}_a^nF)',\phi\rangle,
\end{equation*}
and (\ref{7}) holds. From (\ref{7}) and the definition of derivative in the distributional sense we get (\ref{8}). By definition of fractional derivative we have $\mathcal{D}_a^nf=D^m\mathcal{J}_a^{m-n}f$, from where
 (\ref{10})  follows. Since $\mathcal{J}_a^{1-n}f$ is a temperate distribution and $\mathcal{D}_a^nF=D(\mathcal{J}_a^{1-n}F)$, we have (\ref{9}).
\end{proof}

\begin{corollary}\label{coro3} Let $n\in\mathbb{R}^+$, $m:=     \lceil n        \rceil$, $f\in D_{HK}$ and  $F\in C_0$  the primitive of $f$. Then
\begin{equation}\label{14}\mathcal{D}_a^nF=D^{m-1}\mathcal{J}_a^{m-n}f.
\end{equation}

\end{corollary}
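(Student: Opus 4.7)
The plan is to deduce Corollary \ref{coro3} as an almost immediate consequence of Theorem \ref{prop2}, specifically identity (\ref{7}). First I would unfold the definition of the Riemann-Liouville differential operator: by Definition \ref{def 4.1},
$$\mathcal{D}_a^n F = D^m \mathcal{J}_a^{m-n} F.$$
Since $m = \lceil n \rceil$, the exponent $m-n$ lies in $[0, 1) \subset \mathbb{R}^+ \cup \{0\}$, so Theorem \ref{prop2} is applicable with $m-n$ replacing $n$. As $F \in C_0$ is the primitive of $f \in D_{HK}$, equation (\ref{7}) yields
$$\mathcal{J}_a^{m-n} f = D\bigl(\mathcal{J}_a^{m-n} F\bigr).$$

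Next I would peel one distributional derivative off $D^m$ and substitute: writing $D^m = D^{m-1} \circ D$, we obtain
$$\mathcal{D}_a^n F = D^{m-1} \bigl( D \mathcal{J}_a^{m-n} F \bigr) = D^{m-1} \mathcal{J}_a^{m-n} f,$$
which is precisely (\ref{14}).

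As a sanity check I would verify the corner cases. If $n \in \mathbb{N}$ then $m = n$ and $m - n = 0$, so $\mathcal{J}_a^0 = I$ and the identity reduces to the defining property $DF = f$ of the primitive. If $0 < n < 1$ then $m = 1$ and $m - 1 = 0$, whence the corollary collapses to $\mathcal{D}_a^n F = \mathcal{J}_a^{1-n} f$, recovering (\ref{9}) from Theorem \ref{prop2}. Thus the sole ingredient is (\ref{7}), and there is no genuine obstacle: the corollary is essentially a one-line consequence of the preceding theorem, packaging the relation between $\mathcal{D}_a^n F$ and $\mathcal{J}_a^{m-n} f$ in a form parallel to (\ref{10}).
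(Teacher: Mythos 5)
Your proof is correct and follows essentially the same route as the paper: the paper deduces (\ref{14}) from (\ref{8}), which is precisely the distributional-pairing form of the identity $\mathcal{J}_a^{m-n}f=D(\mathcal{J}_a^{m-n}F)$ that you invoke from (\ref{7}) before peeling one derivative off $D^m$. The sanity checks for $n\in\mathbb{N}$ and $0<n<1$ are a nice addition but not a departure from the paper's argument.
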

\begin{proof}
The equality (\ref{14}) follows from expression (\ref{8}).
\end{proof}
The following example shows that, although $F$ is differentiable nowhere, its fractional derivative of any order is well defined. Moreover, the fractional integral and derivative of arbitrary order of $F'$ are always well defined in the distributional sense.
\begin{example}
\label{exam 1} Let  $n\in\mathbb{R}^+$ and $F\in C_0$ such that $F'(x)$ does not exists for any $x\in [a,b]$. Note that  $F\not\in ACG_*(I)$ for any $I\subset [a,b]$, hence $F'$ does not belong to $HK(I) $ nor $L^1(I)$. However,  $F'\in D_{HK}$ and $\int_a^xDF=F(x)$ for all $x\in [a,b]$.  By Theorem \ref{prop2} we have
 $$\mathcal{J}_a^{n}F'(\phi)=T_{\mathcal{J}_a^{n}F}'(\phi)=-\int_{-\infty}^\infty \mathcal{J}_a^nF(x)\phi'(x)dx\;\;   $$and $$\mathcal{D}_a^nF'(\phi)=T^{(m+1)}_{\mathcal{J}^{m-n}_a F}(\phi)=(-1)^{m+1}\int_{-\infty}^\infty \mathcal{J}_a^{m-n}F(x)\phi^{(m+1)}(x)dx\; ,$$
for all $\phi\in \mathcal{D}(a,b)$ where $\mathcal{J}_a^{n}F$ defines an absolutely continuous functions for any $n>0$ and $T^{(m)}$ denotes the m$-th$ distributional derivative of the distribution $T$. This means, the fractional integral and  derivative of any order   of $F'$ exist,  even though  $F'$ is not a Henstock-Kurzweil integrable function  and $F$ is differentiable nowhere, respectively. In particular, for $n\geq 1$, we have that $\mathcal{J}_a^{n}F'\in C^0(\overline{\mathbb{R}})$. On the other hand, by Definition \ref{def 4.1} and Corollary \ref{coro3} for all $\phi\in \mathcal{D}(a,b)$ $$\mathcal{D}_a^nF(\phi)=(-1)^{m}\int_{-\infty}^{\infty}\mathcal{J}_a^{m-n}F(x)\phi^{(m)}(x)dx=(-1)^{m-1}\mathcal{J}_a^{m-n}F'(\phi^{(m-1)}).$$
\end{example}



   \begin{lemma}\label{lemma4} For any $n\in\mathbb{R}^+ $, $f\in D_{HK}$ and $\phi\in\mathcal{D}(a,b)$ we have that  \begin{equation}
   \langle D^{j}(\mathcal{J}_a^{n}f), \phi\rangle=\langle D^{j}f,\mathcal{J}_{b_-}^n\phi\rangle,\label{operators r l}
\end{equation} where $j$ is any positive integer or zero.
   \end{lemma}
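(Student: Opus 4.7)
The plan is to establish the $j = 0$ case via a Fubini-type duality on $L^1$ approximations, and then reduce the case $j \geq 1$ to it using the definition of the distributional derivative together with the commutation $\mathcal{J}_{b_-}^n \phi^{(j)} = (\mathcal{J}_{b_-}^n \phi)^{(j)}$, valid for all test functions $\phi \in \mathcal{D}(a,b)$.

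For $j = 0$, I would pick $(f_k) \subset L^1[a,b]$ with $\|f_k - f\|_A \to 0$ and apply classical Fubini to each $k$:
\[
\int_a^b \mathcal{J}_a^n f_k(x)\, \phi(x)\,dx = \frac{1}{\Gamma(n)}\int_a^b f_k(t)\!\int_t^b (x-t)^{n-1}\phi(x)\,dx\,dt = \int_a^b f_k(t)\, \mathcal{J}_{b_-}^n \phi(t)\,dt.
\]
On the left, Theorem \ref{coro1}(iii) gives $\mathcal{J}_a^n f_k \to \mathcal{J}_a^n f$ in Alexiewicz norm, and since $\phi \in BV[a,b]$ the H\"older inequality \cite[Theorem 7]{A24} lets me pass to the limit to obtain $\langle \mathcal{J}_a^n f, \phi \rangle$. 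For the right, I would verify that $\mathcal{J}_{b_-}^n \phi \in BV[a,b]$: changing variables by $u = t - x$ and differentiating under the integral, the boundary term at the upper limit vanishes because $\phi(b) = 0$, so $(\mathcal{J}_{b_-}^n \phi)'$ exists as a bounded function, hence $\mathcal{J}_{b_-}^n \phi$ is of bounded variation on $[a,b]$. The same H\"older estimate then transfers $f_k \to f$ into convergence of the right-hand integral to $\langle f, \mathcal{J}_{b_-}^n \phi \rangle$.

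For $j \geq 1$, the definition of the distributional derivative together with the base case applied to $\phi^{(j)} \in \mathcal{D}(a,b)$ yields
\[
\langle D^j(\mathcal{J}_a^n f), \phi \rangle = (-1)^j \langle \mathcal{J}_a^n f, \phi^{(j)} \rangle = (-1)^j \langle f, \mathcal{J}_{b_-}^n \phi^{(j)} \rangle.
\]
Iterating the differentiation-under-the-integral argument used in the base case $j$ times gives the commutation $\mathcal{J}_{b_-}^n \phi^{(j)} = (\mathcal{J}_{b_-}^n \phi)^{(j)}$, since $\phi$ and all its derivatives vanish at $b$, killing every boundary term.

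The main obstacle is to give meaning to the pairing $\langle D^j f, \mathcal{J}_{b_-}^n \phi \rangle$: the function $\mathcal{J}_{b_-}^n \phi$ is smooth on $(a,b)$ but its support typically reaches all the way to $a$, so it is \emph{not} a test function in $\mathcal{D}(a,b)$, and the standard distributional pairing does not apply directly. The natural interpretation is $\langle D^j f, g \rangle := (-1)^j \int_a^b f\cdot g^{(j)}$, well-defined via the H\"older extension of the $D_{HK}$--$BV$ duality \cite[Theorem 7]{A24} whenever $g^{(j)} \in BV[a,b]$. With this convention and the commutation above, the chain $(-1)^j \langle f, \mathcal{J}_{b_-}^n \phi^{(j)} \rangle = (-1)^j \langle f, (\mathcal{J}_{b_-}^n \phi)^{(j)} \rangle = \langle D^j f, \mathcal{J}_{b_-}^n \phi \rangle$ closes the identity; the case $0 < n < 1$ requires only a quick check that the singularity at $t = x$ does not prevent $(\mathcal{J}_{b_-}^n \phi)^{(i)}$ from remaining in $BV[a,b]$.
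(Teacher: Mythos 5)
Your proof is correct and follows essentially the same route as the paper: the $j=0$ case is the identical Fubini-on-$L^1$-approximants computation followed by passage to the limit using Alexiewicz convergence against $BV$ functions (the paper likewise records that $\mathcal{J}_{b_-}^n\phi\in AC[a,b]\subset BV[a,b]$), and your $j\geq 1$ step merely spells out the reduction the paper dispatches with ``by the definition of derivative in the distributional sense.'' If anything you are more careful than the paper: the remark following the lemma asserts $J_{b_-}^n\phi\in\mathcal{D}(a,b)$, which is false in general since the support of $\mathcal{J}_{b_-}^n\phi$ reaches down to $a$ (exactly as you observe), so your convention $\langle D^jf,g\rangle:=(-1)^j\int_a^b f\,g^{(j)}$ together with the commutation $(\mathcal{J}_{b_-}^n\phi)^{(j)}=\mathcal{J}_{b_-}^n\phi^{(j)}$ (valid because $\phi$ vanishes near $b$) is precisely what is needed to make the right-hand side of the identity well-defined.
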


\begin{proof}
Let $n\in\mathbb{R}^+$, $\phi \in \mathcal{D}(a,b)$ and $f\in D_{HK}$. By Theorem \ref{coro1}, Theorem \ref{convergence} and the Fubini Theorem
\begin{eqnarray}
\langle \mathcal{J}_a^{n}f, \phi\rangle &:=&\lim_{k\rightarrow\infty}\frac{1}{\Gamma(n)}\int_a^bf_k(t)\int_t^b(x-t)^{n-1}\phi(x)\,dx\,dt,\nonumber\\
&=&\lim_{k\rightarrow\infty}\int_a^bf_k(t)\mathcal{J}_{b_-}^n\phi(t)\, dt.\label{rightoperator}
\end{eqnarray}
For each $\phi\in \mathcal{D}(a,b)$, $\mathcal{J}_{b_-}^n\phi\in AC[a,b]\subset BV[a,b]$. Since $(f_k)$ converges in the Alexiewicz norm to $f$, applying Theorem \ref{convergence} in (\ref{rightoperator}) we get
$$\langle \mathcal{J}_a^{n}f, \phi\rangle=\int_a^bf(t)\mathcal{J}_{b_-}^n\phi(t)dt=\langle f,\mathcal{J}_{b_-}^n\phi\rangle.$$ From here and by the definition of derivative in distributional sense, (\ref{operators r l}) holds.
\end{proof}

Applying integration by parts, semigroup property \cite{A20}, and FTC,  is easy to see that $J_{b_-}^n\phi\in \mathcal{D}(a,b)$. Since $\mathcal{J}_{b_-}^n\phi=J_{b_-}^n\phi$, (\ref{operators r l}) is well defined.   Now we will show a version of the integration by parts formula for $\mathcal{J}_a^n$ on $D_{HK}$.
   \begin{theorem} Let $n\in\mathbb{R}^+$, $f\in D_{HK}$ and $\phi\in BV[a,b]$. Then,
   \begin{equation}
   \int_a^b\phi(x)\mathcal{J}_a^nf(x)\,dx=\int_a^b f(t)\mathcal{J}_{b_-}^n\phi(t)\,dt.\label{22}
   \end{equation}
   \end{theorem}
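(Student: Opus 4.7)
The plan is to extend the $j=0$ case of Lemma \ref{lemma4} from $\phi\in\mathcal{D}(a,b)$ to $\phi\in BV[a,b]$ by means of an Alexiewicz-norm approximation of $f$. Choose $(f_k)\subset L^1[a,b]$ with $\|f_k-f\|_A\to 0$. Since $\phi\in BV[a,b]$ is bounded, $f_k$ is Lebesgue integrable, and the kernel $(x-t)^{n-1}$ is Lebesgue integrable on the triangle $\{a\le t\le x\le b\}$, the classical Fubini theorem applies to
$$\frac{1}{\Gamma(n)}\int_a^b\phi(x)\int_a^x(x-t)^{n-1}f_k(t)\,dt\,dx,$$
and swapping the order of integration yields
$$\int_a^b\phi(x)\mathcal{J}_a^nf_k(x)\,dx=\int_a^b f_k(t)\mathcal{J}_{b_-}^n\phi(t)\,dt$$
for every $k$.

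Next, I pass to the limit $k\to\infty$. On the left, Theorem \ref{coro1}(iii) yields $\mathcal{J}_a^nf_k\to\mathcal{J}_a^nf$ in the Alexiewicz norm; combined with $\phi\in BV[a,b]$, Theorem \ref{convergence}(i) gives $\int_a^b\phi\,\mathcal{J}_a^nf_k\,dx\to \int_a^b\phi\,\mathcal{J}_a^nf\,dx$. On the right, $f_k\to f$ in Alexiewicz norm, so, provided $\mathcal{J}_{b_-}^n\phi\in BV[a,b]$, the same theorem yields $\int_a^b f_k\,\mathcal{J}_{b_-}^n\phi\,dt\to\int_a^b f\,\mathcal{J}_{b_-}^n\phi\,dt$, and the identity (\ref{22}) passes to the limit.

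The main obstacle, and the only non-routine step, is to verify $\mathcal{J}_{b_-}^n\phi\in BV[a,b]$. For $n\ge 1$, differentiation under the integral sign gives $\tfrac{d}{dx}\mathcal{J}_{b_-}^n\phi=-\mathcal{J}_{b_-}^{n-1}\phi$ on $(a,b)$ (with the convention $\mathcal{J}_{b_-}^0\phi:=\phi$), which is bounded on $[a,b]$, so $\mathcal{J}_{b_-}^n\phi$ is Lipschitz and a fortiori $BV$. For $0<n<1$, a Riemann--Stieltjes integration by parts yields
$$\mathcal{J}_{b_-}^n\phi(x)=\frac{\phi(b)(b-x)^n}{\Gamma(n+1)}-\frac{1}{\Gamma(n+1)}\int_x^b(t-x)^n\,d\phi(t);$$
writing $\phi=\phi_1-\phi_2$ with $\phi_i$ non-decreasing, the function $x\mapsto\int_x^b(t-x)^n\,d\phi_i(t)$ is non-increasing, since for $x_1<x_2$ its increment equals $\int_{x_1}^{x_2}(t-x_1)^n d\phi_i(t)+\int_{x_2}^b[(t-x_1)^n-(t-x_2)^n]d\phi_i(t)\ge 0$, so each summand is $BV$ and a direct calculation gives $V(\mathcal{J}_{b_-}^n\phi)\le(b-a)^n(|\phi(b)|+V\phi)/\Gamma(n+1)$. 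With this bound the limit passages above are legitimate and (\ref{22}) follows.
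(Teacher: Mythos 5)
Your proof is correct and follows essentially the same route as the paper: approximate $f$ in the Alexiewicz norm by $(f_k)\subset L^1[a,b]$, apply the classical Fubini theorem to each $f_k$ to get the identity for $f_k$, and pass to the limit on both sides using the fact that Alexiewicz convergence implies weak convergence against $BV[a,b]$ integrators. The only substantive difference is that the paper merely asserts $\mathcal{J}_{b_-}^n\phi\in AC[a,b]\subset BV[a,b]$ without justification, whereas you supply a correct verification of the (sufficient) $BV$ membership --- via the Lipschitz estimate for $n\ge 1$ and the Riemann--Stieltjes integration by parts together with the Jordan decomposition for $0<n<1$ --- which is the one genuinely non-routine step of the argument.
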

   \begin{proof}
   Let $n\in\mathbb{R}^+$, $f\in D_{HK}$ and $\phi\in BV[a,b]$. Analogously, as in the proof of Lemma \ref{lemma4} we have
\begin{eqnarray}
 \int_a^b\phi(x)\mathcal{J}_a^nf(x)\,dx&=&  \int_a^b\phi(x)\lim_{k\rightarrow\infty}\frac{1}{\Gamma(n)}\int_a^x(x-t)^{n-1}f_k(t)\,dt\,dx\nonumber\\&=&\lim_{k\rightarrow\infty}\int_a^bf_k(t)\mathcal{J}_{b_-}^n\phi(t) \,dt,\nonumber\end{eqnarray}
where $(f_k)\subset L^1[a,b]$ and $||f_k-f||_A\rightarrow 0$, as $k\rightarrow\infty$.
 Since $\phi\in BV[a,b]$, we have that $\mathcal{J}_{b_-}^n\phi\in AC[a,b]$ and $ AC[a,b]\subset BV[a,b]$. 
By Theorem \ref{convergence} we have (\ref{22}).
   \end{proof}

\begin{theorem} Assume that $n_1,n_2\in\mathbb{R}^+\cup\{0\} $,  $g\in D_{HK}$ and $f=\mathcal{J}_a^{n_1+n_2}g$. Then $$\mathcal{D}_a^{n_1}\mathcal{D}_a^{n_2}f=\mathcal{D}_a^{n_1+n_2}f.$$
\end{theorem}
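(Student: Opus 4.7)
The plan is to reduce both $\mathcal{D}_a^{n_1}\mathcal{D}_a^{n_2}f$ and $\mathcal{D}_a^{n_1+n_2}f$ to the single distribution $g$, using only two ingredients: the semigroup property of the Riemann-Liouville integral (Theorem \ref{semigroup}) and the fundamental theorem of fractional calculus in the distributional setting, i.e., $\mathcal{D}_a^{\alpha}\mathcal{J}_a^{\alpha}h = h$ for every $h\in D_{HK}$ and $\alpha\geq 0$ (Theorem \ref{teo6}, announced in the introduction).

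The boundary cases $n_1=0$ or $n_2=0$ reduce to a tautology because $\mathcal{D}_a^0=I$, so I would dispose of them immediately and work with $n_1,n_2>0$. For the right-hand side, applying the inversion identity once gives
\[
\mathcal{D}_a^{n_1+n_2}f \;=\; \mathcal{D}_a^{n_1+n_2}\mathcal{J}_a^{n_1+n_2}g \;=\; g.
\]
For the left-hand side, I would first split $f$ by Theorem \ref{semigroup}(i):
\[
f \;=\; \mathcal{J}_a^{n_1+n_2}g \;=\; \mathcal{J}_a^{n_2}\bigl(\mathcal{J}_a^{n_1}g\bigr)\qquad\text{in } D_{HK}.
\]
Since Theorem \ref{coro1}(i) ensures $\mathcal{J}_a^{n_1}g\in D_{HK}$, the inversion identity applies to peel off $\mathcal{D}_a^{n_2}$, giving $\mathcal{D}_a^{n_2}f = \mathcal{J}_a^{n_1}g$. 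A second application then yields $\mathcal{D}_a^{n_1}\mathcal{D}_a^{n_2}f = \mathcal{D}_a^{n_1}\mathcal{J}_a^{n_1}g = g$, and comparing the two reductions proves the claim.

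The only substantive point is the inversion identity $\mathcal{D}_a^{\alpha}\mathcal{J}_a^{\alpha} = I$ on $D_{HK}$; this is the genuine fundamental theorem and is exactly what Theorem \ref{teo6} provides. If one preferred to avoid quoting it, the unfolding
\[
\mathcal{D}_a^{\alpha}\mathcal{J}_a^{\alpha}h \;=\; D^{\lceil\alpha\rceil}\mathcal{J}_a^{\lceil\alpha\rceil-\alpha}\mathcal{J}_a^{\alpha}h \;=\; D^{\lceil\alpha\rceil}\mathcal{J}_a^{\lceil\alpha\rceil}h
\]
via Theorem \ref{semigroup}(i) reduces the task to showing $D^m\mathcal{J}_a^mh=h$ distributionally for each positive integer $m$; this follows by induction from the distributional FTC (Theorem \ref{FTCdis}), since for $m=1$ the object $\mathcal{J}_a^1 h$ is precisely the $C_0$-primitive of $h$. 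The main delicacy in the whole argument is therefore bookkeeping: every time an integral is split or an inversion is applied, one must verify the intermediate object lives in $D_{HK}$ so that the definition of $\mathcal{D}_a^\alpha$ (and hence the applicable distributional derivatives) makes sense, which is precisely what Theorems \ref{coro1}(i) and \ref{semigroup}(i) guarantee.
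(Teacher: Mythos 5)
Your proof is correct and is essentially the argument the paper intends: the paper's own proof consists solely of the remark that one proceeds as in Diethelm's Theorem 2.13, and that argument is precisely your double application of the inversion identity $\mathcal{D}_a^{\alpha}\mathcal{J}_a^{\alpha}=I$ together with the semigroup property to reduce both $\mathcal{D}_a^{n_1}\mathcal{D}_a^{n_2}f$ and $\mathcal{D}_a^{n_1+n_2}f$ to $g$. The only cosmetic point is that Theorem \ref{teo6} is stated after this result in the paper, but its proof is independent of it, so there is no circularity; your fallback unfolding via $D^m\mathcal{J}_a^m h=h$ addresses that in any case.
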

\begin{proof}
We proceed as in \cite[Theorem 2.13]{A6}.
\end{proof}

\begin{remark}
Note that $\mathcal{D}_a^{n_2}\mathcal{D}_a^{n_1}f=\mathcal{D}_a^{n_2+n_1}f=\mathcal{D}_a^{n_1+n_2}f=\mathcal{D}_a^{n_1}\mathcal{D}_a^{n_2}f$, for  $g\in D_{HK}$ and $f=\mathcal{J}_a^{n_1+n_2}g$.
\end{remark}

Now we will prove the fundamental theorem of calculus in the distributional sense, it means,  $\mathcal{D}_a^n$ inverts $\mathcal{J}_a^n$ on  $ D_{HK}$ and for any $n\in\mathbb{R}^+$.

\begin{theorem}\label{teo6} Let $n\in\mathbb{R}^+\cup\{0\} $. Then, for every $f\in D_{HK}$,
\begin{equation*}
\mathcal{D}_a^n\mathcal{J}_a^{n}f=f.
\end{equation*}
\end{theorem}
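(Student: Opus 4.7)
The plan is to reduce the fractional identity to an integer-order one via the semigroup property, then iterate the distributional fundamental theorem of calculus.

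The case $n=0$ is immediate, so fix $n\in\mathbb{R}^+$ and set $m:=\lceil n\rceil\geq 1$. By Definition \ref{def 4.1},
\[
\mathcal{D}_a^n\mathcal{J}_a^n f \;=\; D^m\mathcal{J}_a^{m-n}\bigl(\mathcal{J}_a^n f\bigr).
\]
My first step is to invoke the semigroup property (Theorem \ref{semigroup}(i)) with parameters $m-n\geq 0$ and $n$, whose sum is $m\geq 1$, to collapse the inner composition:
\[
\mathcal{J}_a^{m-n}\mathcal{J}_a^n f \;=\; \mathcal{J}_a^m f \qquad \text{in } D_{HK}.
\]
(In fact the identity holds pointwise in $C[a,b]$ since $m\geq 1$, which ensures we are differentiating a bona fide continuous function in the distributional sense.) The problem therefore reduces to proving the integer-order identity
\[
D^m\mathcal{J}_a^m f \;=\; f \qquad \text{in } \mathcal{D}'(a,b).
\]

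I would establish this by induction on $m$. For the base case $m=1$, Definition \ref{def n>1} gives $\mathcal{J}_a^1 f(x)=\int_a^x f=F(x)$, where $F\in C_0$ is the primitive of $f$ guaranteed by Definition \ref{defdistri}; by the very definition of $F$ (equivalently, by Theorem \ref{FTCdis}(i)), $DF=f$, so $D\mathcal{J}_a^1 f=f$. For the inductive step, assume $D^k\mathcal{J}_a^k g=g$ for every $g\in D_{HK}$. Apply Theorem \ref{semigroup}(i) with parameters $k\geq 1$ and $1$ to write $\mathcal{J}_a^{k+1}f=\mathcal{J}_a^k(\mathcal{J}_a^1 f)=\mathcal{J}_a^k F$ as elements of $C[a,b]$, hence as distributions. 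Since $F\in C_0\subset D_{HK}$, the hypothesis applied to $F$ gives $D^k\mathcal{J}_a^k F=F$ in $\mathcal{D}'(a,b)$, and therefore
\[
D^{k+1}\mathcal{J}_a^{k+1}f \;=\; D\bigl(D^k\mathcal{J}_a^k F\bigr) \;=\; DF \;=\; f,
\]
closing the induction.

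The only delicate point is to keep every equality at the level of distributions in $\mathcal{D}'(a,b)$, so that the iterated derivative $D^m$ is unambiguously defined; this is automatic because $\mathcal{J}_a^n$ maps $D_{HK}$ into $D_{HK}$ (and into $C_0$ when the order is $\geq 1$) by Theorem \ref{coro1}, so each intermediate object lies in a space where the distributional derivative is well-defined and linear. I do not foresee a genuine obstacle: the real work was already absorbed into the semigroup property (Theorem \ref{semigroup}) and the distributional FTC (Theorem \ref{FTCdis}), and the argument above only orchestrates these two results.
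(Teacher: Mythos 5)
Your proposal is correct and follows essentially the same route as the paper: the paper's proof is exactly $\mathcal{D}_a^n\mathcal{J}_a^{n}f=D^m\mathcal{J}_a^{m-n}\mathcal{J}_a^{n}f=D^m\mathcal{J}_a^{m}f=f$, invoking the semigroup property and the distributional FTC. The only difference is that you spell out the induction behind $D^m\mathcal{J}_a^m f=f$, which the paper compresses into a single citation of Theorem \ref{FTCdis}.
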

\begin{proof}
In the case $n=0$, the statement is trivial because $\mathcal{D}_a^n$ and $\mathcal{J}_a^{n}$ are both the identity operator. Assume that $n\in\mathbb{R}^+$ and let $m:=       \lceil  n\rceil$. Then, by the definition of $\mathcal{D}_a^n$, the semigroup property of the integral operators  and the fundamental theorem of calculus (Theorem \ref{FTCdis}),
$$\mathcal{D}_a^n\mathcal{J}_a^{n}f:=D^m\mathcal{J}_a^{m-n}\mathcal{J}_a^{n}f=D^m\mathcal{J}_a^{m}f=f.$$
\end{proof}

\begin{theorem} Let $f_1,f_2$ in $D_{HK}$, $n\in\mathbb{R}^+\cup\{0\} $ and $c_1, c_2\in\mathbb{R}$. Then,
$$\mathcal{D}^n(c_1f_1+c_2f_2)=c_1\mathcal{D}^nf_1+c_2\mathcal{D}^nf_2.$$
\end{theorem}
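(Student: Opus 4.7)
The plan is to combine the definition $\mathcal{D}_a^n f := D^m \mathcal{J}_a^{m-n} f$, where $m := \lceil n \rceil$, with the linearity of each constituent operator, so that linearity of $\mathcal{D}_a^n$ is inherited automatically. The case $n = 0$ is trivial, since $\mathcal{D}_a^0 = I$.

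For $n \in \mathbb{R}^+$, I would first invoke Theorem \ref{coro1}~(ii), which already establishes that $\mathcal{J}_a^{m-n}$ is a bounded \emph{linear} operator on $D_{HK}$. In particular,
$$\mathcal{J}_a^{m-n}(c_1 f_1 + c_2 f_2) = c_1\,\mathcal{J}_a^{m-n} f_1 + c_2\,\mathcal{J}_a^{m-n} f_2$$
as elements of $D_{HK} \subset \mathcal{D}'(a,b)$.

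Next I would use linearity of the distributional derivative: for any $T_1,T_2 \in \mathcal{D}'(a,b)$, test function $\phi \in \mathcal{D}(a,b)$, and scalars $c_1,c_2$, the identity
$$\langle D(c_1 T_1 + c_2 T_2),\phi\rangle = -\langle c_1 T_1 + c_2 T_2, \phi'\rangle = c_1\,\langle D T_1,\phi\rangle + c_2\,\langle D T_2,\phi\rangle$$
follows directly from the definition of the weak derivative, and iterating $m$ times yields linearity of $D^m$ on $\mathcal{D}'(a,b)$.

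Composing the two linear maps then gives
$$\mathcal{D}_a^n(c_1 f_1 + c_2 f_2) = D^m \mathcal{J}_a^{m-n}(c_1 f_1 + c_2 f_2) = c_1 D^m \mathcal{J}_a^{m-n} f_1 + c_2 D^m \mathcal{J}_a^{m-n} f_2 = c_1\,\mathcal{D}_a^n f_1 + c_2\,\mathcal{D}_a^n f_2,$$
which is the claimed identity. There is no genuine obstacle here: the statement is a one-line consequence of two elementary linearity facts, one proved earlier in the paper (Theorem \ref{coro1}~(ii)) and one built into the definition of the distributional derivative. The only point worth being careful about is that the composition $D^m \circ \mathcal{J}_a^{m-n}$ is well defined on all of $D_{HK}$ with values in $\mathcal{D}'(a,b)$, which is precisely the content of the remark following Definition \ref{def 4.1}.
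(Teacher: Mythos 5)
Your proposal is correct and follows essentially the same route as the paper, which simply observes that linearity follows from the linearity of the (distributional) derivative; you have merely made explicit the additional ingredient that $\mathcal{J}_a^{m-n}$ is linear (Theorem \ref{coro1}~(ii)) and that the composition is well defined. No substantive difference from the paper's argument.
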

\begin{proof}
The linearity property follows from the linearity of the derivative.
\end{proof}

Now we will show some relations between Riemann-Liouville integrals and derivatives.

\begin{corollary}\label{cor2}
Let $n\in\mathbb{R}^+\cup\{0\} $. If there exists some $g\in D_{HK}$ such that $f=\mathcal{J}_a^{n}g$, then
\begin{equation*}
\mathcal{J}_a^{n}\mathcal{D}_a^{n}f=f.
\end{equation*}
\end{corollary}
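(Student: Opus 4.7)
The plan is to derive this as an immediate consequence of Theorem \ref{teo6}, exploiting the hypothesis that $f$ lies in the range of $\mathcal{J}_a^n$. I would first dispose of the case $n = 0$, where both $\mathcal{J}_a^0$ and $\mathcal{D}_a^0$ are the identity operator and the identity $\mathcal{J}_a^0 \mathcal{D}_a^0 f = f$ is trivial. For $n > 0$, writing $f = \mathcal{J}_a^n g$ with $g \in D_{HK}$, Theorem \ref{teo6} gives
$$\mathcal{D}_a^n f \;=\; \mathcal{D}_a^n \mathcal{J}_a^n g \;=\; g.$$
In particular $\mathcal{D}_a^n f$ lies in $D_{HK}$ (and not merely in $\mathcal{D}'(a,b)$, which is all that Definition \ref{def 4.1} guarantees a priori), so it makes sense to feed it into the operator $\mathcal{J}_a^n : D_{HK} \to D_{HK}$ supplied by Theorem \ref{coro1}. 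Doing so yields
$$\mathcal{J}_a^n \mathcal{D}_a^n f \;=\; \mathcal{J}_a^n g \;=\; f,$$
which is the asserted identity.

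The only conceptual content is the observation that Theorem \ref{teo6} establishes $\mathcal{D}_a^n$ as a left inverse of $\mathcal{J}_a^n$ on all of $D_{HK}$, whereas this corollary is the complementary statement that $\mathcal{D}_a^n$ also acts as a right inverse on the range $\mathcal{J}_a^n(D_{HK})$. There is essentially no technical obstacle here: the delicate analytic ingredients — Alexiewicz-norm approximation by $L^1$ functions, Fubini-type manipulations, and the semigroup identity $\mathcal{J}_a^{m-n}\mathcal{J}_a^n = \mathcal{J}_a^m$ on which Theorem \ref{teo6} rests — have all been absorbed into the preceding theorems, so the proof reduces to a one-line algebraic substitution.
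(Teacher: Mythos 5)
Your proof is correct and is essentially identical to the paper's: both substitute $f=\mathcal{J}_a^n g$ and invoke Theorem \ref{teo6} to collapse $\mathcal{J}_a^n[\mathcal{D}_a^n\mathcal{J}_a^n g]$ to $\mathcal{J}_a^n g=f$. Your added remark that $\mathcal{D}_a^n f=g\in D_{HK}$ (so that applying $\mathcal{J}_a^n$ is legitimate) is a sensible clarification the paper leaves implicit, but it does not change the argument.
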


\begin{proof}
It follows from definition of $f$ and Theorem \ref{teo6},
$$\mathcal{J}_a^{n}\mathcal{D}_a^{n}f=\mathcal{J}_a^{n}[\mathcal{D}_a^{n}\mathcal{J}_a^{n}g]=\mathcal{J}_a^{n}g=f.$$
\end{proof}



\begin{corollary}Let $0<n<1$. Assume that $f$ is such that $\mathcal{J}_a^{1-n}f\in C_0$. Then, \begin{equation}\mathcal{J}_a^n\mathcal{D}_a^nf=f.\label{19}\end{equation}
In particular, if $f\in C_0$, then (\ref{19}) holds as well.
\end{corollary}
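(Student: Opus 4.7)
The plan is to apply equation (7) of Theorem \ref{prop2} to the distribution $g:=\mathcal{D}_a^n f$, after identifying its $C_0$-primitive explicitly, and then collapse the resulting expression using the semigroup property (Theorem \ref{semigroup}) and the distributional FTC (Theorem \ref{FTCdis}). The hypothesis $\mathcal{J}_a^{1-n}f\in C_0$ is exactly what is needed to make this identification legitimate, so the argument is almost purely formal once the primitive is in hand.

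Concretely: since $0<n<1$ we have $m=\lceil n\rceil=1$, so by Definition \ref{def 4.1}, $\mathcal{D}_a^n f = D\mathcal{J}_a^{1-n}f$. By assumption $\mathcal{J}_a^{1-n}f\in C_0$, which (together with the uniqueness of the $C_0$-primitive recalled after Theorem \ref{isomorphic}) shows that $\mathcal{D}_a^n f\in D_{HK}$ with $C_0$-primitive precisely $\mathcal{J}_a^{1-n}f$. Applying equation (7) of Theorem \ref{prop2} with $f$ replaced by $\mathcal{D}_a^n f$ and $F$ replaced by $\mathcal{J}_a^{1-n}f$ yields
\[
\mathcal{J}_a^n \mathcal{D}_a^n f \;=\; D\bigl(\mathcal{J}_a^n \mathcal{J}_a^{1-n}f\bigr).
\]
Now Theorem \ref{semigroup}(i) in $D_{HK}$ gives $\mathcal{J}_a^n \mathcal{J}_a^{1-n}f = \mathcal{J}_a^1 f$, and by definition of the distributional Henstock-Kurzweil integral, $\mathcal{J}_a^1 f$ is the unique $C_0$-primitive $F$ of $f$ (that $F\in C_0$ follows from Theorem \ref{FTCdis}(i)). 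Differentiating, again by Theorem \ref{FTCdis}(i), $DF=f$, and hence $\mathcal{J}_a^n\mathcal{D}_a^n f=f$, proving (\ref{19}).

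For the ``in particular'' assertion, if $f\in C_0$ then $f$ is bounded continuous, hence an element of $L^1[a,b]$, and $\mathcal{J}_a^{1-n}f$ coincides with the classical Riemann-Liouville integral of the continuous function $f$. A standard uniform-continuity estimate shows this convolution with the $L^1$ kernel $\phi_{1-n}$ is continuous on $[a,b]$, and evaluation at $x=a$ gives $\mathcal{J}_a^{1-n}f(a)=0$; thus $\mathcal{J}_a^{1-n}f\in C_0$ and the preceding case applies.

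The main obstacle I anticipate is the identification of the $C_0$-primitive: one must be sure that ``$\mathcal{J}_a^{1-n}f\in C_0$'' forces $\mathcal{J}_a^{1-n}f$ to actually equal the $C_0$-primitive of $\mathcal{D}_a^n f=D\mathcal{J}_a^{1-n}f$ (as opposed to merely differing by a constant), so that the primitive $F$ in formula (7) of Theorem \ref{prop2} is unambiguously $\mathcal{J}_a^{1-n}f$ and the subsequent semigroup step applies verbatim. This is secured by the uniqueness clause for $C_0$-primitives. Everything else is routine bookkeeping.
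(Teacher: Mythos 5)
Your proof is correct, but it takes a genuinely different route from the paper's. The paper starts from the hypothesis $\mathcal{J}_a^{1-n}f\in C_0$ and uses the isomorphism $D_{HK}\simeq C_0$ (Theorem \ref{isomorphic}) to produce $\phi\in D_{HK}$ with $\mathcal{J}_a^{1-n}f=\mathcal{J}_a^1\phi$; applying $\mathcal{D}_a^{1-n}$ to both sides and using Theorem \ref{teo6} together with the semigroup property yields $f=\mathcal{J}_a^n\phi$, i.e.\ $f$ lies in the range of $\mathcal{J}_a^n$, at which point Corollary \ref{cor2} finishes the argument. You instead recognize $\mathcal{J}_a^{1-n}f$ as the (unique) $C_0$-primitive of $\mathcal{D}_a^nf=D\mathcal{J}_a^{1-n}f$ and apply formula (\ref{7}) of Theorem \ref{prop2} to the distribution $\mathcal{D}_a^nf$, obtaining $\mathcal{J}_a^n\mathcal{D}_a^nf=D(\mathcal{J}_a^n\mathcal{J}_a^{1-n}f)=D(\mathcal{J}_a^1f)=f$ by the semigroup property and Theorem \ref{FTCdis}. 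Your route is shorter and bypasses both the isomorphism theorem and Corollary \ref{cor2}; the paper's route is slightly longer but exhibits the structural reason the inversion holds, namely that the hypothesis forces $f$ to be a fractional integral $\mathcal{J}_a^n\phi$ of some integrable distribution. Your concern about identifying the $C_0$-primitive exactly (not merely up to a constant) is the right one, and the uniqueness clause does settle it. You also treat the ``in particular'' case more explicitly than the paper, which merely asserts that $f\in C_0$ implies $\mathcal{J}_a^{1-n}f\in C_0$; your observation that a bounded continuous $f$ is in $L^1[a,b]$ and that $|J_a^{1-n}f(x)|\leq \|f\|_\infty(x-a)^{1-n}/\Gamma(2-n)$ supplies the missing detail.
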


\begin{proof}
Assume that $\mathcal{J}_a^{1-n}f\in C_0$. Since $C_0$ is isometrically isomorphic to $D_{HK}$ (Theorem \ref{isomorphic}) there exists $\phi\in D_{HK}$ such that
\begin{equation}\label{20}\mathcal{J}_a^{1-n}f:=\mathcal{J}_a^1\phi.\end{equation}
Applying the operator $\mathcal{D}_a^{1-n}$ in (\ref{20}) by  Theorem \ref{semigroup} and Theorem \ref{teo6},
\begin{equation*}\label{21}f=\mathcal{J}_a^{n}\phi.\end{equation*}
It follows from Corollary \ref{cor2}. Now let us consider $f\in C_0$, then $\mathcal{J}_a^{1-n}f\in C_0$ it follows in an analogous way.
\end{proof}

We have that the differential operator $D$ is the inverse operator  $\mathcal{ J}_a^1$. This means, $$D\mathcal{J}_a^{1}f=f,$$
for any $f\in D_{HK}$.  Analogously, for $n\in\mathbb{R}^+$ and $f\in D_{HK}
$ $$\mathcal{D}_a^n\mathcal{J}_a^{n}f=f.$$
Now, let us consider  $F\in C[a,b]$. 
  In general, $\int_a^xDF(t)dt\neq F(x)$ because of the appearance of the constant $F(a)$.
We will show that in general, $\mathcal{J}_a^n$ does not inverts $D_a^n$. Even more, we provide an explicit expression.



\begin{definition}\label{defCn}
Let $n\in\mathbb{N}$. It is said that $F\in \mathcal{C}^{n}[a,b]$ if and only if $F^{(n-1)}\in C[a,b]$ in the classical sense, it means, $\left(\frac{d}{dx}\right)^{n-1} F(x)= F^{(n-1)}(x)$.
\end{definition}

It is clear that $\mathcal{C}^1[a,b]=C[a,b]$. Note that if $F^{(n-1)}\in C[a,b]$, then for any $\phi\in \mathcal{D}(a,b)$ we have $$\langle F^{(n-1)},\phi\rangle=(-1)^{n-1}\langle F,\phi^{(n-1)}\rangle.$$  Moreover, if $F\in \mathcal{C}^n[a,b]$, then $F, F^{(1)},...,F^{(n-1)}\in C[a,b]$ and $F^{(k-1)}$ is the primitive of $F^{(k)}$, therefore $$F^{(k-1)}(x)=\int_a^xF^{(k)}(t\,)dt+F^{(k-1)}(a),$$ with another notation
$$D^{k-1}F=F^{(k-1)},$$
where $k=1,2,...,n-1.$ For $k=n$ we have  $D^nF=\phi$ for some $\phi\in D_{HK}$.

\begin{lemma}\label{lemma 4.9} The space $\mathcal{C}^n[a,b]$ consists of those and only those functions $f(x)$, which are represented in the form
$$f(x)=\frac{1}{(n-1)!}\int_a^x(x-t)^{n-1}\phi(t)\,dt+\sum_{k=0}^{n-1}c_k(x-a)^k,$$
where $\phi\in D_{HK}$, $c_k$ being arbitrary constants.
\end{lemma}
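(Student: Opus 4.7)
The plan is to prove the two inclusions separately. The nontrivial direction is to show that every $F\in\mathcal{C}^n[a,b]$ admits the stated representation; for the reverse, I would verify that a function given by this formula actually has a continuous $(n-1)$-th classical derivative.

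For the forward direction, I would start from the observation that if $F\in \mathcal{C}^n[a,b]$, then $F^{(n-1)}\in C[a,b]$ in the classical sense. Put $\phi := DF^{(n-1)}$, the distributional derivative. Since $F^{(n-1)}-F^{(n-1)}(a)\in C_0$ and $\phi$ is its distributional derivative, we have $\phi\in D_{HK}$ by Definition \ref{defdistri}. By Theorem \ref{FTCdis}(ii),
\begin{equation*}
F^{(n-1)}(x) = \int_a^x \phi(t)\,dt + F^{(n-1)}(a) = \mathcal{J}_a^{1}\phi(x) + F^{(n-1)}(a).
\end{equation*}
Now I would iterate: using the classical FTC (available because each $F^{(k)}$ is continuous for $k\le n-1$), integrate $n-1$ more times and at each step absorb the integration constant $F^{(k-1)}(a)$, so that
\begin{equation*}
F^{(k-1)}(x)=\mathcal{J}_a^{1}F^{(k)}(x)+F^{(k-1)}(a).
\end{equation*}
A straightforward induction using the semigroup property $\mathcal{J}_a^{1}\mathcal{J}_a^{n-k}\phi=\mathcal{J}_a^{n-k+1}\phi$ (Theorem \ref{semigroup}) and the elementary identity $\mathcal{J}_a^{1}((x-a)^{j})=(x-a)^{j+1}/(j+1)$ then yields
\begin{equation*}
F(x)=\mathcal{J}_a^{n}\phi(x)+\sum_{k=0}^{n-1}c_k(x-a)^k
\end{equation*}
for suitable constants $c_k$ (explicit linear combinations of $F(a),F'(a),\dots,F^{(n-1)}(a)$). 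Since $n\in\mathbb{N}$, $\mathcal{J}_a^n\phi$ reduces to the integral $\frac{1}{(n-1)!}\int_a^x(x-t)^{n-1}\phi(t)\,dt$ by Definition \ref{def n>1}, giving the desired representation.

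For the converse, given $\phi\in D_{HK}$ and constants $c_0,\dots,c_{n-1}$, I would set $f(x)=\mathcal{J}_a^{n}\phi(x)+\sum_{k=0}^{n-1}c_k(x-a)^k$ and verify that $f^{(n-1)}\in C[a,b]$. The polynomial summand contributes the constant $c_{n-1}(n-1)!$ to the $(n-1)$-th classical derivative. For the convolutional part, write $\Phi:=\mathcal{J}_a^{1}\phi\in C_0$ and apply the semigroup property to get $\mathcal{J}_a^{n}\phi=\mathcal{J}_a^{n-1}\Phi$. Since $\Phi\in C[a,b]$, by repeatedly invoking Theorem \ref{FTCdis}(i), the function $\mathcal{J}_a^{n-1}\Phi$ is $n-1$ times classically differentiable on $[a,b]$, and its $(n-1)$-th derivative equals $\Phi$, which is continuous. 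This places $\mathcal{J}_a^n\phi$ in $\mathcal{C}^n[a,b]$, and the closure of $\mathcal{C}^n[a,b]$ under sums concludes the argument.

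The only real subtlety is keeping the bookkeeping of integration constants transparent and confirming that, for an integer order $n$, the distributional iterated integration genuinely matches the classical one at each intermediate stage so that the constants $c_k$ can be absorbed cleanly; this is guaranteed by Theorem \ref{FTCdis}(ii) applied at each continuous primitive level and by the coincidence $\mathcal{D}_a^k=D^k$ for integer $k$. No new distributional machinery is needed beyond what is already established.
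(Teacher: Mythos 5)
Your proposal is correct and follows essentially the same route as the paper: the paper's (very terse) proof likewise rests on Definition \ref{defCn}, the distributional characterization of $C[a,b]$ via Theorem \ref{FTCdis}, and Definition \ref{def n>1}, identifying $\phi=D^{n}f$ and $c_k=f^{(k)}(a)/k!$ exactly as you do. Your write-up merely makes explicit the iteration of primitives and the verification of the converse inclusion that the paper leaves to the reader.
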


\begin{proof} The proof follows from Definition \ref{defCn},  characterization of $C[a,b]$ in terms of distributional integral and  Definition \ref{def n>1}. Moreover, $\phi= D^{n}f$  and $c_k=f^{(k)}(a)/k!=D^kf(a)/k!$.
\end{proof}

Let $n$ be a positive number. We define    $\lfloor n \rfloor$ as the greatest integer less than or equal to $n$.

\begin{theorem} Let $n\in\mathbb{R}^+$ and $m:=	\lfloor n \rfloor+1$. Assume that $f$ is such that $\mathcal{J}_a^{m-n}f\in \mathcal{C}^m[a,b]$. Then, $$\mathcal{J}_a^n\mathcal{D}_a^nf(x)=f(x)-\sum_{k=0}^{m-1}\frac{(x-a)^{n-k-1}}{\Gamma(n-k)}D^{m-k-1}\mathcal{J}_a^{m-n}f(a).$$
 In particular, if $\ 0<n<1$ we have
$$\mathcal{J}_a^n\mathcal{D}_a^nf(x)=f(x)-\frac{(x-a)^{n-1}}{\Gamma(n)}\mathcal{J}_a^{1-n}f(a).$$
\end{theorem}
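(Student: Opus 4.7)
The plan is to reduce the claim to a single ``shift formula'' iterated $m$ times. Set $g := \mathcal{J}_a^{m-n}f$; by hypothesis $g \in \mathcal{C}^m[a,b]$, so $D^k g \in C[a,b]$ for $0 \leq k \leq m-1$ and $D^m g \in D_{HK}$. By Definition \ref{def 4.1}, $\mathcal{J}_a^n \mathcal{D}_a^n f = \mathcal{J}_a^n D^m g$, so the task is to compute this expression and recover $f$ modulo the advertised boundary polynomial.

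The key auxiliary identity is the following. For every $F \in C[a,b]$,
\[
\mathcal{J}_a^n(DF) \;=\; D(\mathcal{J}_a^n F) \;-\; F(a)\,\frac{(x-a)^{n-1}}{\Gamma(n)}. \qquad(\ast)
\]
I would prove $(\ast)$ as a direct consequence of Theorem \ref{prop2}: since $F - F(a) \in C_0$ is the $C_0$-primitive of the distribution $DF \in D_{HK}$, Theorem \ref{prop2} yields $\mathcal{J}_a^n(DF) = D\mathcal{J}_a^n(F - F(a))$, and the elementary computation $\mathcal{J}_a^n 1 = (x-a)^n/\Gamma(n+1)$ together with the linearity of $\mathcal{J}_a^n$ (Theorem \ref{coro1}) produces the stated right-hand side.

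Next I would iterate $(\ast)$: by induction on $j = 0, 1, \dots, m$,
\[
\mathcal{J}_a^n D^m g \;=\; D^j\,\mathcal{J}_a^n D^{m-j}g \;-\; \sum_{k=0}^{j-1} \frac{(x-a)^{n-k-1}}{\Gamma(n-k)}\,D^{m-k-1}g(a),
\]
the inductive step applying $(\ast)$ to $F = D^{m-j-1}g$ and then differentiating $j$ times in the distributional sense; the identity $D^j\bigl[(x-a)^{n-1}/\Gamma(n)\bigr] = (x-a)^{n-j-1}/\Gamma(n-j)$ supplies the new summand. Taking $j = m$, the semigroup property (Theorem \ref{semigroup}) gives $\mathcal{J}_a^n g = \mathcal{J}_a^n \mathcal{J}_a^{m-n}f = \mathcal{J}_a^m f$, and Theorem \ref{teo6} at the integer order $m$ yields $D^m \mathcal{J}_a^m f = \mathcal{D}_a^m \mathcal{J}_a^m f = f$. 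Substituting $g = \mathcal{J}_a^{m-n}f$ in each boundary term then delivers the claimed formula; the displayed case $0 < n < 1$ is exactly the specialisation $m = 1$.

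The step I expect to demand the most care is the shift formula $(\ast)$ when $n$ is itself a positive integer, because some factors $1/\Gamma(n-k)$ then have non-positive integer argument. Under the standard convention $1/\Gamma(-\ell) = 0$ for $\ell \in \mathbb{N}_0$ the corresponding summands vanish, which matches what one obtains by differentiating the polynomial $(x-a)^n/\Gamma(n+1)$ past its degree; consequently the iteration goes through uniformly in $n$. The remaining content of the argument is bookkeeping built on Theorem \ref{prop2}, Theorem \ref{semigroup}, and Theorem \ref{teo6}.
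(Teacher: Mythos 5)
Your argument is correct, but it is organized quite differently from the paper's. The paper proves this theorem by invoking Lemma \ref{lemma 4.9}: it writes $\mathcal{J}_a^{m-n}f=\mathcal{J}_a^m\phi+\sum_{k=0}^{m-1}\frac{(x-a)^k}{k!}D^k\mathcal{J}_a^{m-n}f(a)$ for an auxiliary $\phi\in D_{HK}$, applies $\mathcal{J}_a^nD^m$ to this decomposition to get $\mathcal{J}_a^n\mathcal{D}_a^nf=\mathcal{J}_a^n\phi$ (the polynomial is annihilated by $D^m$), separately applies $\mathcal{D}_a^{m-n}$ to the same decomposition to obtain $f=\mathcal{J}_a^n\phi+\sum_{k=0}^{m-1}\frac{(x-a)^{k+n-m}}{\Gamma(k+n-m+1)}D^k\mathcal{J}_a^{m-n}f(a)$, and finishes with the index substitution $k\mapsto m-k-1$. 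You instead distill everything into the single shift identity $(\ast)$, $\mathcal{J}_a^n(DF)=D\mathcal{J}_a^nF-F(a)(x-a)^{n-1}/\Gamma(n)$, obtained from Theorem \ref{prop2} applied to the $C_0$-primitive $F-F(a)$ together with $\mathcal{J}_a^n1=(x-a)^n/\Gamma(n+1)$, and you iterate it $m$ times by induction before closing with the semigroup property and Theorem \ref{teo6}. Your derivation of $(\ast)$ and the inductive bookkeeping both check out (each application of $(\ast)$ uses that $D^{m-j-1}\mathcal{J}_a^{m-n}f\in C[a,b]$, which is exactly what $\mathcal{C}^m[a,b]$ supplies, and $D^j[(x-a)^{n-1}/\Gamma(n)]=(x-a)^{n-j-1}/\Gamma(n-j)$ is legitimate since $n-j-1>-1$ for $j\le m-1$ when $n\notin\mathbb{N}$). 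What your route buys is transparency --- each boundary term is visibly produced by one application of $(\ast)$, no auxiliary distribution $\phi$ is needed, and the reindexing step disappears; you also explicitly address the integer-order degeneration via the convention $1/\Gamma(-\ell)=0$, which the paper leaves implicit. What the paper's route buys is reuse of Lemma \ref{lemma 4.9} and a closer parallel to the classical $L^1$ proof in \cite{A6}. One small point you gloss over: for integer $n$ the theorem's $m=\lfloor n\rfloor+1$ differs from the $\lceil n\rceil$ in Definition \ref{def 4.1}, so the identification $\mathcal{D}_a^nf=D^m\mathcal{J}_a^{m-n}f$ needs the extra observation $D^{n+1}\mathcal{J}_a^1f=D^nf$; this is immediate from Theorem \ref{FTCdis} and the paper is equally silent about it.
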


   \begin{proof}
   Since $\mathcal{J}_a^{m-n}f\in C^m$, by Theorem \ref{isomorphic} there exits $\phi\in D_{HK}$ such that
   $$D^{m-1}\mathcal{J}_a^{m-n}f=\mathcal{J}_a^1\phi+D^{m-1}\mathcal{J}_a^{m-n}f(a).$$   By Lemma \ref{lemma 4.9}  we have that
\begin{equation}\label{23} \mathcal{J}_a^{m-n}f(x)=\mathcal{J}_a^m\phi(x)+\sum_{k=0}^{m-1}\frac{(x-a)^{k}}{k!}D^k\mathcal{J}_a^{m-n}f(a).
\end{equation}
By definition of $\mathcal{D}_a^n$, the expression (\ref{23}) and Theorem \ref{semigroup}
\begin{eqnarray}
\mathcal{J}_a^n\mathcal{D}_a^nf(x)&:=&\mathcal{J}_a^n\mathcal{D}^m\mathcal{J}_a^{m-n}f(x)\nonumber\\&=&\mathcal{J}_a^n\mathcal{D}^m\left[\mathcal{J}_a^m\phi+\sum_{k=0}^{m-1}\frac{(x-a)^{k}}{k!}D^k\mathcal{J}_a^{m-n}f(a)\right]\nonumber\\
&=&\mathcal{J}_a^n\mathcal{D}^m\mathcal{J}_a^m\phi+\sum_{k=0}^{m-1}\frac{\mathcal{J}_a^nD^m(\cdot-a)^{k}(x)}{k!}D^k\mathcal{J}_a^{m-n}f(a)\nonumber\\&=&\mathcal{J}_a^n\phi.\label{17}
\end{eqnarray}
 Now applying the operator $\mathcal{D}_a^{m-n}$ to both sides of (\ref{23}) and by Theorem \ref{semigroup} we have that
   \begin{eqnarray*}
   f(x)=\mathcal{D}_a^{m-n}\mathcal{J}_a^m\phi(x)+\sum_{k=0}^{m-1}\frac{\mathcal{D}_a^{m-n}[(\cdot-a)^k](x)}{k!}D^k\mathcal{J}_a^{m-n}f(a).
   \end{eqnarray*}
   By definition of derivative, Theorem \ref{semigroup} and Example 2.4 in \cite{A6},
     \begin{eqnarray}\label{18}
   f(x)=\mathcal{J}_a^n\phi(x)+\sum_{k=0}^{m-1}\frac{(x-a)^{k+n-m}}{\Gamma(k+n-m+1)}D^k\mathcal{J}_a^{m-n}f(a).
   \end{eqnarray}
So, we substitute $k$ by $m-k-1$ in (\ref{18})  and by the expression (\ref{17}), we obtain the result.
   \end{proof}

Following the idea that the primitives of an element in $D_{HK}$ differ by a constant, we will generalize the following results, \cite[Lemma 2.1]{A20}. Let $0<n<1$ and $F\in AC[a,b]$. Then $J_a^{1-n}F \in AC[a,b]$ and
   $$J_a^{1-n}F(x)=\frac{F(a)(x-a)^{-n+1}}{\Gamma(-n+2)}+J_a^{2-n}F'(x).$$
   Furthermore, under the same assumptions of \cite[Lemma 2.12]{A6} we have  $$D_a^nF(x)=\frac{F(a)}{\Gamma(1-n)(x-a)^n}+J_a^{1-n}F'(x).$$
Now we give the corresponding generalizations in the distributional sense.

   \begin{corollary}\label{corollary 4} Let  $0<n<1$ and $F\in C[a,b]$. Then $\mathcal{J}_a^{1-n}F \in C[a,b]$ and
   $$\mathcal{J}_a^{1-n}F(x)=\frac{F(a)(x-a)^{-n+1}}{\Gamma(-n+2)}+\mathcal{J}_a^{2-n}f(x)$$
  for   $f\in D_{HK}$ such that $f=DF$. Moreover,
   $$\mathcal{D}_a^nF(x)=\frac{F(a)}{\Gamma(1-n)(x-a)^n}+\mathcal{J}_a^{1-n} f(x).$$
   \end{corollary}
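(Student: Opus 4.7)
The plan is to reduce the statement to the fundamental theorem of calculus (Theorem \ref{FTCdis}) combined with the semigroup property (Theorem \ref{semigroup}) and a direct evaluation of $\mathcal{J}_a^{1-n}$ on a constant.

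First, I would establish the integral identity. By Theorem \ref{FTCdis} applied to $F\in C[a,b]$ and its distributional derivative $f=DF\in D_{HK}$, one has
\begin{equation*}
F(x)=F(a)+\int_a^x f(t)\,dt=F(a)+\mathcal{J}_a^1 f(x).
\end{equation*}
Since $F\in C[a,b]$ is bounded and $(x-t)^{-n}$ is integrable on $[a,x]$ for $0<n<1$, the integral $\mathcal{J}_a^{1-n}F(x)=\frac{1}{\Gamma(1-n)}\int_a^x(x-t)^{-n}F(t)\,dt$ exists and is continuous in $x$, so $\mathcal{J}_a^{1-n}F\in C[a,b]$. Applying the linear operator $\mathcal{J}_a^{1-n}$ to both sides of the FTC decomposition, and using that on the constant function $F(a)$ it evaluates by direct calculation to
\begin{equation*}
\mathcal{J}_a^{1-n}[F(a)](x)=\frac{F(a)}{\Gamma(1-n)}\int_a^x (x-t)^{-n}\,dt=\frac{F(a)(x-a)^{1-n}}{(1-n)\Gamma(1-n)}=\frac{F(a)(x-a)^{1-n}}{\Gamma(2-n)},
\end{equation*}
together with the semigroup identity $\mathcal{J}_a^{1-n}\mathcal{J}_a^1 f=\mathcal{J}_a^{2-n}f$ from Theorem \ref{semigroup}, yields the first claimed formula.

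Next, I would derive the formula for $\mathcal{D}_a^n F$ by applying the distributional derivative $D$ to the identity just obtained. Since $0<n<1$, we have $m=\lceil n\rceil=1$, so by Definition \ref{def 4.1}, $\mathcal{D}_a^n F=D\mathcal{J}_a^{1-n}F$. The first term on the right-hand side is absolutely continuous with classical derivative
\begin{equation*}
\frac{d}{dx}\frac{F(a)(x-a)^{1-n}}{\Gamma(2-n)}=\frac{F(a)(1-n)(x-a)^{-n}}{\Gamma(2-n)}=\frac{F(a)(x-a)^{-n}}{\Gamma(1-n)},
\end{equation*}
which coincides with the distributional derivative on $(a,b)$. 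For the second term, I would rewrite $\mathcal{J}_a^{2-n}f=\mathcal{J}_a^1\mathcal{J}_a^{1-n}f$ by the semigroup property and then invoke Theorem \ref{FTCdis}(i) applied to $\mathcal{J}_a^{1-n}f\in D_{HK}$ (which lies in $D_{HK}$ by Theorem \ref{coro1}) to conclude $D\mathcal{J}_a^{2-n}f=D\mathcal{J}_a^1\mathcal{J}_a^{1-n}f=\mathcal{J}_a^{1-n}f$. Summing the two pieces gives the desired expression for $\mathcal{D}_a^n F$.

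The main conceptual check rather than obstacle is ensuring that distributional operations can be interchanged with the action of $\mathcal{J}_a^{1-n}$ on the pointwise decomposition $F=F(a)+\mathcal{J}_a^1 f$; this is justified because $\mathcal{J}_a^{1-n}$ is a bounded linear operator on $D_{HK}$ by Theorem \ref{coro1}, and the intermediate function $(x-a)^{1-n}$ is absolutely continuous on $[a,b]$, so its distributional and classical derivatives agree. Everything else is a routine combination of the FTC in $D_{HK}$, the semigroup property for $\mathcal{J}_a^n$, and the elementary calculation of $\mathcal{J}_a^{1-n}$ applied to a constant.
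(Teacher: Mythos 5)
Your argument is correct and follows the same skeleton as the paper's own proof: both start from the decomposition $F(x)=F(a)+\int_a^x f$ supplied by Theorem \ref{FTCdis}, evaluate $\mathcal{J}_a^{1-n}$ on the constant $F(a)$ by the elementary computation $\int_a^x(x-t)^{-n}\,dt=(x-a)^{1-n}/(1-n)$, and then obtain the formula for $\mathcal{D}_a^nF=D\mathcal{J}_a^{1-n}F$ by differentiating the two pieces. The one genuine difference is how the remaining term is identified with $\mathcal{J}_a^{2-n}f$. You invoke the semigroup property $\mathcal{J}_a^{1-n}\mathcal{J}_a^{1}f=\mathcal{J}_a^{2-n}f$ from Theorem \ref{semigroup}, which indeed gives pointwise equality because one of the two orders is $\geq 1$; the paper instead works at the level of convolutions, setting $h(u)=u^{-n}$, $F_0=\int_a^{\cdot}f$, $H_0=\int_0^{\cdot}h$ and appealing to Talvila's commutation identity $h*F_0=H_0*f$ for pairs in $L^1(\mathbb{R})\times\mathcal{A}_C$ to rewrite $\int_a^x(x-t)^{-n}F_0(t)\,dt$ as $\int_a^x\frac{(x-t)^{1-n}}{1-n}f(t)\,dt$. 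These are the same identity in different clothing (the Talvila identity is precisely the semigroup law with one exponent equal to $1$), but your route is shorter and stays entirely inside results already established in the paper, whereas the paper's route makes the convolution mechanism explicit. The same remark applies to the last step: the paper computes $D\mathcal{J}_a^{2-n}f=DD\mathcal{J}_a^{2-n}F_0=D\mathcal{J}_a^{1-n}F_0=\mathcal{J}_a^{1-n}f$ via Theorem \ref{prop2}, while your version $D\mathcal{J}_a^{1}\mathcal{J}_a^{1-n}f=\mathcal{J}_a^{1-n}f$ via Theorem \ref{FTCdis}(i) and Theorem \ref{coro1}(i) is equally valid and slightly more direct.
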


   \begin{proof}
  Let $F\in C[a,b]$. By Theorem \ref{isomorphic} and Theorem \ref{FTCdis} we have that  $$F(x)=\int_a^xf(z)dz+F(a)=\int_a^xDF+F(a),$$ for some $f\in D_{HK}$ such that $f=DF$. Let $0<n<1$ be fixed. Then
   \begin{eqnarray}
   \mathcal{J}_a^{1-n}F(x)&:=&\frac{1}{\Gamma(1-n)}\int_a^x(x-t)^{-n}F(t)\,dt\nonumber\\
   &=&\frac{1}{\Gamma(1-n)}\int_a^x(x-t)^{-n}\left(F(a)+\int_a^tf(z)dz\right)dt.\label{F}
\end{eqnarray}
Let us consider $f\in D_{HK}$,  $h(u)=u^{-n} $ if $ 0< u\leq b-a$ and 0 else, and let us define their definite integrals
as \[F_0(t)=\int_a^tf,\quad t\geq a\quad \text{ and }\quad H_0(x)=\int_0^xh,\quad x\geq0.\] We emphasize with the subindex zero that the definite integrals $F_0$ and $H_0$ are in $C_0$ and $AC[a,b]\cap C_0$,  respectively. It is clear that $f$ can be considered as an element of $\mathcal{A}_c$ and $h$ belongs to $L^1(\mathbb{R})$. By \cite[Theorem 4.4]{A22} we have that $H_0*f\in C_0$ and $H_0*f(x)=h*F_0(x)$ for all $x\in\mathbb{R}$.
Moreover,
\begin{eqnarray}
\int_\mathbb{R}h(x-t)F_0(t)\,dt&=&\int_a^xh(x-t)F_0(t)\,dt\nonumber\\&=&\int_\mathbb{R}H_0(x-t)f(t)\,dt\nonumber\\&=&\int_a^xH_0(x-t)f(t)\,dt,\label{convolu}
\end{eqnarray}
for $a\leq x\leq b$. On the other hand,
\begin{eqnarray}
H_0*f(x)=\int_a^x\frac{(x-t)^{-n+1}}{-n+1}f(t)\,dt.\label{16}
\end{eqnarray}
 By expressions (\ref{F}), (\ref{convolu}), (\ref{16}) and properties of gamma function we have that
$$  \frac{1}{\Gamma(1-n)}\int_a^x(x-t)^{-n}\left(F(a)+\int_a^tf(z)dz\right)dt$$
   \begin{eqnarray}
  &=&\frac{F(a)}{\Gamma(2-n)}(x-a)^{1-n}+ \frac{1}{\Gamma(1-n)}h*F_0(x)\nonumber\\
   &=&\frac{F(a)}{\Gamma(2-n)}(x-a)^{1-n}+ \frac{1}{\Gamma(1-n)}H_0*f(x)\nonumber\\
   &=&\frac{F(a)}{\Gamma(2-n)}(x-a)^{1-n}+\mathcal{J}_a^{2-n}f(x).\nonumber
\end{eqnarray}
It is clear that $\mathcal{J}_a^{1-n}F\in C[a,b]$. On the other hand,
\begin{eqnarray}
\mathcal{D}_a^n F(x)&:=&D\mathcal{J}^{1-n}F(x)\nonumber\\&=&D\left[\frac{F(a)(x-a)^{-n+1}}{\Gamma(-n+2)}+\mathcal{J}_a^{2-n}f(x)\right]\nonumber\\
&=&\frac{F(a)}{\Gamma(1-n)(x-a)^n}+D\mathcal{J}_a^{2-n}f(x).\nonumber
\end{eqnarray}
 By Theorem \ref{prop2}, Theorem \ref{semigroup} and Theorem \ref{teo6}  we have
$$D\mathcal{J}_a^{2-n}f(x)=DD\mathcal{J}_a^{2-n}F_0(x)=D\mathcal{J}_a^{1-n}F_0(x)=\mathcal{J}_a^{1-n}f(x).$$
   \end{proof}

   \begin{example} \label{exam 2} Let $f:[0,1]\rightarrow\mathbb{R}$ defined as
$$f(t)=\left \{ \begin{matrix} (-1)^{k+1}2^kk^{-1} & \mbox{if \ } \mbox{} t\in[c_{k-1},c_k)
\\ 0 & \mbox{ if\ }\mbox{ }   t=1,\end{matrix}\right.$$
where $c_i=1-2^{-i}$, $i=0,1,2,...$
It is well known that $f\in HK[0,1]\setminus L^1[0,1]$, see \cite{A2}. Therefore, $F(t):=\int_0^tf$ belongs to $ACG_*[0,1]\setminus AC[0,1]$, see e.g., \cite{A10}.  Since $F$ is in $C_0$, we have that $\mathcal{J}_0^{1-n}F$ belongs to $AC[0,1]$ for $0<n<1$, hence $\mathcal{J}_0^{1-n}F$ is differentiable a.e. Taking $n=1/2$ and  applying Corollary   \ref{corollary 4} we can calculate  $\mathcal{J}_0^{1/2}f$,  even though $f\not\in L^1[0,1]$,
\begin{eqnarray}
\mathcal{J}_0^{1/2}f(x)&=&\mathcal{D}_0^{1/2}F(x)\nonumber\\
&=& \frac{1}{\sqrt{\pi}}\left[\sum_{k=1}^{n-1}\frac{(-1)^{k+1}}{k}+2c_{n-1}\frac{(-1)^{n+1}}{n}\right] x^{-1/2}+\frac{(-1)^{n+1}2^{n+1}}{n\pi}x^{1/2},\nonumber
\end{eqnarray}
when $x\in (c_{n-1}, c_n)$, in case $x=c_i$ for  $i\in\mathbb{N}$ we have that $\mathcal{D}_0^{1/2}F(x)$ does not exists. This example illustrates that $f$ does not need to be Lebesgue integrable to achieve its fractional integral pointwise a.e. Moreover,  according to Definition \ref{def 4.1} and equality (\ref{7}) from Theorem \ref{prop2} the operators
$\mathcal{J}_0^{1/2}f$ and $\mathcal{D}_0^{1/2}F$ are distributions given by the same  regular distribution, this means that for all $\phi\in\mathcal{D}(a,b)$
$$\mathcal{J}_0^{1/2}f(\phi)=-\int_{-\infty}^{\infty}\mathcal{J}_0^{1/2}F(x)\phi'(x)dx=\int_{-\infty}^{\infty}\mathcal{J}_0^{1/2}f(x)\phi(x)dx=\mathcal{D}_0^{1/2}F(\phi),$$
and  $$\mathcal{D}_0^{1/2}f(\phi)=\int_{-\infty}^{\infty}\mathcal{J}_0^{1/2}F(x)\phi''(x)dx=-\int_{-\infty}^{\infty}\mathcal{J}_0^{1/2}f(x)\phi'(x)dx. $$  \end{example}


\section{Some applications}

The integral equation
\begin{eqnarray}\label{Abel equa}
\frac{1}{\Gamma(n)}\int_a^x(x-t)^{n-1}\varphi(t\,)dt=f(x),
\end{eqnarray}
where $0<n<1$,  $f$ is a known function and $\varphi$ is the unknown function, is called the Abel integral equation. Assume this equation is considered on a compact interval $[a,b]$. We will show a generalization of the solution of (\ref{Abel equa}).

\begin{theorem} The Abel integral equation (\ref{Abel equa}) is solvable in $D_{HK}$ if and only if $\mathcal{J}_a^{1-n}f\in C[a,b]$ and $\mathcal{J}_a^{1-n}f(a)=0$.
\end{theorem}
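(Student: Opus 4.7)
The plan is to exploit the semigroup property for Riemann--Liouville integrals (Theorem~\ref{semigroup}), the distributional FTC (Theorem~\ref{FTCdis}), the isomorphism $D_{HK}\simeq C_0$ (Theorem~\ref{isomorphic}), and the inversion identity $\mathcal{D}_a^n\mathcal{J}_a^n=I$ (Theorem~\ref{teo6}). The statement is an if-and-only-if, and both directions reduce to a single observation: applying $\mathcal{J}_a^{1-n}$ turns the Abel equation into the primitive equation $\mathcal{J}_a^1\varphi = \mathcal{J}_a^{1-n}f$.

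For necessity, suppose there exists $\varphi\in D_{HK}$ with $\mathcal{J}_a^n\varphi = f$. Apply $\mathcal{J}_a^{1-n}$ to both sides and use Theorem~\ref{semigroup}(i) with $m+n=1$ to get
\begin{equation*}
\mathcal{J}_a^{1-n}f \;=\; \mathcal{J}_a^{1-n}\mathcal{J}_a^n\varphi \;=\; \mathcal{J}_a^1\varphi.
\end{equation*}
By Theorem~\ref{FTCdis}(i), $\mathcal{J}_a^1\varphi$ is the unique primitive of $\varphi$ in $C_0$, so $\mathcal{J}_a^{1-n}f\in C_0\subset C[a,b]$ and $\mathcal{J}_a^{1-n}f(a)=0$.

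For sufficiency, assume $\mathcal{J}_a^{1-n}f\in C[a,b]$ with $\mathcal{J}_a^{1-n}f(a)=0$, so that $\mathcal{J}_a^{1-n}f\in C_0$. By Theorem~\ref{isomorphic} (equivalently by Theorem~\ref{FTCdis}(i) applied in reverse), its distributional derivative
\begin{equation*}
\varphi \;:=\; D\bigl(\mathcal{J}_a^{1-n}f\bigr)
\end{equation*}
lies in $D_{HK}$ and satisfies $\mathcal{J}_a^1\varphi = \mathcal{J}_a^{1-n}f$. I claim $\varphi$ solves the Abel equation. Since $0<n<1$ gives $\lceil 1-n\rceil =1$, Definition~\ref{def 4.1} yields $\mathcal{D}_a^{1-n}=D\,\mathcal{J}_a^n$. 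Applying $\mathcal{D}_a^{1-n}$ to the equality $\mathcal{J}_a^1\varphi=\mathcal{J}_a^{1-n}f$ and invoking Theorem~\ref{teo6} on the right-hand side gives
\begin{equation*}
D\,\mathcal{J}_a^n\mathcal{J}_a^1\varphi \;=\; \mathcal{D}_a^{1-n}\mathcal{J}_a^{1-n}f \;=\; f.
\end{equation*}
Using Theorem~\ref{semigroup}(i),(ii) to rewrite $\mathcal{J}_a^n\mathcal{J}_a^1\varphi=\mathcal{J}_a^1\mathcal{J}_a^n\varphi$, and then Theorem~\ref{FTCdis}(i) to collapse $D\mathcal{J}_a^1=I$ on $D_{HK}$, the left-hand side simplifies to $\mathcal{J}_a^n\varphi$. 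Hence $\mathcal{J}_a^n\varphi=f$ in $D_{HK}$.

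The main technical point is keeping distributional and pointwise statements straight: the hypothesis $\mathcal{J}_a^{1-n}f\in C[a,b]$ alone is not enough because the would-be primitive of $\varphi$ must vanish at $a$ (otherwise $\varphi$ may fail to be the distributional derivative of an element of $C_0$, and the step $D\mathcal{J}_a^1=I$ could pick up boundary terms). The condition $\mathcal{J}_a^{1-n}f(a)=0$ is precisely what secures this; once it is granted, the semigroup property and Theorem~\ref{teo6} do the rest mechanically.
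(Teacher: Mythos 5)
Your proof is correct and follows essentially the same route as the paper's: both directions hinge on applying $\mathcal{J}_a^{1-n}$ (resp.\ $\mathcal{D}_a^{1-n}$) to reduce the Abel equation to the primitive equation $\mathcal{J}_a^1\varphi=\mathcal{J}_a^{1-n}f$, then invoking the semigroup property (Theorem \ref{semigroup}), the inversion identity of Theorem \ref{teo6}, and the identification of $C_0$ with $D_{HK}$. Your sufficiency argument merely spells out in more detail the steps the paper compresses into a single line.
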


\begin{proof}
\textit{Necessity}. Let $0<n<1$ be fixed. Assume that the Abel integral equation (\ref{Abel equa}) has a solution in $D_{HK}$. It means, given $f\in D_{HK}$ there exists $\varphi\in D_{HK}$ such that $$\mathcal{J}_a^n\varphi(x)=f(x).$$
Applying $\mathcal{J}_a^{1-n}$, by Theorem \ref{semigroup} we get
$$\mathcal{J}_a^1\varphi(x)=\int_a^x\varphi(t)\,dt=\mathcal{J}_a^{1-n}f(x).$$By hypothesis $\varphi\in D_{HK}$, thus $\mathcal{J}_a^{1-n}f\in C_0$. Since the Abel integral equation has a solution, it must be $\varphi=\mathcal{D}_a^nf$.

\textit{Sufficiency}.  Assume that there exists $f\in D_{HK}$ such that  $\mathcal{J}_a^{1-n}f\in C_0$. Then there exists $\varphi_1\in D_{HK}$ such that
\begin{equation*}\label{f}\mathcal{J}_a^{1-n}f(x)=\int_a^x\varphi_1(t)\,dt.
\end{equation*}
We will prove that $\varphi_1$ is solution of (\ref{Abel equa}). Applying $\mathcal{D}_a^{1-n}$ by Theorem \ref{teo6} and Theorem \ref{semigroup} we have
$$f=\mathcal{J}_a^n\varphi_1.$$
\end{proof}
\begin{remark}It is possible to prove this result using the associative property of the convolution, change of variable and the isomorphism between $D_{HK}$ and $C_0$.
\end{remark}

The Fourier transform of a distribution $T$ is defined as
$$\langle \widehat{T},\phi\rangle=\langle T,\widehat{\phi}\rangle,$$
where $\phi\in\mathcal{ D}$, the set of test functions,  and the Fourier transform of $ g$ at $x$ is $$\widehat{g}(x)=\int_{-\infty}^\infty g(t)\exp(-2\pi itx)dt.$$
Thus, we obtained the following properties of the Fourier transform for the Riemann-Liouville integral and derivative.
\begin{proposition} Let $n\in\mathbb{R}^+\cup\{0\} $, $F\in C_0$ and $f\in D_{HK}$ such that $DF=f$. Then
\begin{itemize}
\item[(i)] $$\langle\widehat{\mathcal{J}_a^nf},\phi\rangle=\langle\eta \widehat{\mathcal{J}_a^nF},\phi\rangle,$$ where $\eta=2\pi i s$ and $\phi\in\mathcal{ D}(a,b)$.
\item[(ii)] For $m-1\leq n< m$, $$ \langle\widehat{\mathcal{D}_a^nf},\phi\rangle=\langle\eta^{m}\widehat{\mathcal{J}_a^{m-n}f},\phi\rangle=\langle\eta^{m+1}\widehat{\mathcal{J}_a^{m-n}F},\phi\rangle.$$
\item[(iii)] Let $k\in\mathbb{N}$. Then $$\langle(D^k\widehat{\mathcal{J}_a^n f}),\phi\rangle=\langle\widehat{(-\eta)^k\mathcal{J}_a^nf,\phi}\rangle,$$
where $D^{k}$ denotes the $k-$folds iterates of the derivative in distributional sense.
\end{itemize}
\end{proposition}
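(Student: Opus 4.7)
The plan is to reduce each of the three assertions to the standard symbolic calculus for the Fourier transform on temperate distributions, combined with two facts already proved in the paper: the identity $\mathcal{J}_a^n f = D(\mathcal{J}_a^n F)$ of Theorem \ref{prop2} and the definition $\mathcal{D}_a^n f = D^m \mathcal{J}_a^{m-n} f$ of Definition \ref{def 4.1}. The Remark after Definition \ref{def n>1} guarantees that every fractional integral appearing here is a temperate distribution, and since distributional differentiation preserves temperateness, so is $\mathcal{D}_a^n f$. Test functions $\phi \in \mathcal{D}(a,b)$ are extended by zero to $\mathcal{D}(\mathbb{R}) \subset \mathcal{S}(\mathbb{R})$ whenever a Schwartz-class test function is needed.

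First I would verify, once and for all, the derivative rule
\begin{equation*}
\widehat{DT} = \eta\,\widehat{T}, \qquad \eta = 2\pi i s,
\end{equation*}
for any temperate $T$. This follows from the duality definition, the relation $(\widehat{\phi})' = -\widehat{\eta\phi}$ for Schwartz $\phi$, and the chain $\langle\widehat{DT},\phi\rangle = -\langle T,(\widehat{\phi})'\rangle = \langle T,\widehat{\eta\phi}\rangle = \langle\widehat{T},\eta\phi\rangle = \langle\eta\widehat{T},\phi\rangle$. Iterating gives $\widehat{D^k T} = \eta^k \widehat{T}$ for every $k\in\mathbb{N}$.

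With this in hand, (i) follows in one line: apply the Fourier transform to equation (\ref{7}) of Theorem \ref{prop2} with $T = \mathcal{J}_a^n F$ to obtain $\widehat{\mathcal{J}_a^n f} = \widehat{D(\mathcal{J}_a^n F)} = \eta\,\widehat{\mathcal{J}_a^n F}$. For (ii), taking the Fourier transform of $\mathcal{D}_a^n f = D^m \mathcal{J}_a^{m-n} f$ and using the iterated rule yields $\widehat{\mathcal{D}_a^n f} = \eta^m\,\widehat{\mathcal{J}_a^{m-n} f}$; a second application of (i), now with order $m-n \in (0,1]$, rewrites the right-hand side as $\eta^{m+1}\,\widehat{\mathcal{J}_a^{m-n} F}$, giving the second equality in (ii).

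For (iii) I would invoke the dual identity, which swaps the roles of differentiation and polynomial multiplication under the Fourier transform. Unrolling the pairing gives
\begin{equation*}
\langle D^k\widehat{T},\phi\rangle
= (-1)^k\langle\widehat{T},\phi^{(k)}\rangle
= (-1)^k\langle T,\widehat{\phi^{(k)}}\rangle
= \langle T,(-\eta)^k\widehat{\phi}\rangle
= \langle\widehat{(-\eta)^k T},\phi\rangle,
\end{equation*}
where in the third equality I use $\widehat{\phi^{(k)}} = \eta^k\widehat{\phi}$ on Schwartz test functions. Specializing to $T = \mathcal{J}_a^n f$ produces the claim. The only real obstacle is notational bookkeeping: in (i) and (ii) the symbol $\eta$ acts as multiplication by $2\pi i s$ on the frequency side, while in (iii) it acts as multiplication by $2\pi i$ times the spatial variable on the side of $T$, and one must verify at each stage that $\eta^k T$ is a well-defined temperate distribution so that the pairings make sense.
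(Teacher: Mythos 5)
Your proposal is correct and follows essentially the same route as the paper: both rest on the duality definition of the Fourier transform, the identity $\mathcal{J}_a^nf=D(\mathcal{J}_a^nF)$ from Theorem \ref{prop2} together with Definition \ref{def 4.1}, and the standard exchange formulas between differentiation and multiplication by $\eta$; the paper merely unrolls the pairing $\langle\widehat{\mathcal{J}_a^nf},\phi\rangle=-\langle\mathcal{J}_a^nF,(\widehat{\phi})'\rangle=\langle\eta\widehat{\mathcal{J}_a^nF},\phi\rangle$ inline where you isolate $\widehat{DT}=\eta\widehat{T}$ as a preliminary lemma. Your explicit remarks on temperateness and on the two distinct roles of $\eta$ in (i)--(ii) versus (iii) are careful additions, not deviations.
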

\begin{proof}By (\ref{7}) from Theorem \ref{prop2}
\begin{eqnarray}
\langle\widehat{\mathcal{J}_a^nf},\phi\rangle&=&\langle\mathcal{J}_a^nf,\widehat{\phi}\rangle\nonumber\\
&=&-\langle\mathcal{J}_a^nF,(\widehat{\phi})\ '\rangle\nonumber\\
&=&-\langle\mathcal{J}_a^nF,\widehat{\psi}\rangle\nonumber\\
&=&\langle\widehat{\mathcal{J}_a^nF},\eta \phi\rangle\nonumber\\
&=&\langle\eta\widehat{\mathcal{J}_a^nF}, \phi\rangle,\label{last}
\end{eqnarray}
where $\psi=-2\pi s\phi(s)$.
Analogously, by definition of distributional derivative and (\ref{last}) we obtain $(ii)$. Let $k\in\mathbb{N}$, it is easy to see
$$\langle D(\widehat{\mathcal{J}_a^n f})
,\phi\rangle=\langle\widehat{-\eta\mathcal{J}_a^nf},\phi\rangle.$$
Applying the same argument $k-1$ times we get the result.
\end{proof}
Also, if $T$ is a distribution and  $\psi$ is a test function, then   the convolution of $\psi*T$ is defined as
$$\langle\psi*T,\phi\rangle=\langle T,\psi^-*\phi\rangle,$$where $\psi
^-(x)=\psi(-x)$ and $\phi\in\mathcal{ D}(a,b)$. 
 Additionally, the convolution theorem holds in distributional sense, see \cite{A7}:  if  $T$ is a temperate distribution and $\psi$ a distribution with compact support, then
$$\widehat{\psi*T}=\widehat{\psi}\cdot\widehat{T},$$
where the right side is the product of a distribution and a  function in $C^\infty(\mathbb{R})$, which defines a temperate distribution. From the previous proposition we obtained the following relations.   Note that $\mathcal{J}_a^nf$ is a temperate distribution when $f\in D_{HK}$ and $n\geq 0$, see \cite{A24}, \cite{A26}.

\begin{proposition}Let $n\in\mathbb{R}^+\cup\{0\}$, $F\in C_0$, $f\in D_{HK}$ such that $DF=f$,  and $\psi, \phi\in \mathcal{D}(a,b)$.
\begin{itemize}
\item[(i)]  Then  $$\langle\widehat{\psi*\mathcal{J}_a^nf},\phi\rangle=\langle\eta\widehat{ \mathcal{J}_a^nF} \cdot\widehat{\psi},\phi\rangle.$$
\item[(ii)] If $\mathcal{D}_a^nf$ is a temperate distribution, $$\langle\widehat{\psi*\mathcal{D}_a^nf},\phi\rangle=\langle\widehat{\eta^{m+1}\mathcal{J}_a^{m-n}F\cdot\widehat{\psi},\phi}\rangle
,$$ where $m-1<n\leq m$.
\end{itemize}
\end{proposition}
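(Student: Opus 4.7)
The plan is to derive both identities by combining two already-established facts: the distributional convolution theorem cited from \cite{A7}, which asserts that $\widehat{\psi\ast T}=\widehat{\psi}\cdot\widehat{T}$ whenever $\psi$ has compact support and $T$ is a temperate distribution, and the previous proposition, which supplies the Fourier-transform identities $\widehat{\mathcal{J}_a^n f}=\eta\,\widehat{\mathcal{J}_a^n F}$ and $\widehat{\mathcal{D}_a^n f}=\eta^{m+1}\widehat{\mathcal{J}_a^{m-n}F}$. Since $\psi\in\mathcal{D}(a,b)$ automatically has compact support, the only nontrivial hypothesis to verify for applying the convolution theorem in each case is the temperateness of the second factor.

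For part (i), I would first recall that for any $f\in D_{HK}$ and any $n\geq 0$ the distribution $\mathcal{J}_a^n f$ is temperate (this is the observation made in the paper right after Definition~\ref{def n>1}, with references to \cite{A24,A26}). Thus the convolution theorem applies and yields $\widehat{\psi\ast\mathcal{J}_a^n f}=\widehat{\psi}\cdot\widehat{\mathcal{J}_a^n f}$. Testing this equality against a test function $\phi\in\mathcal{D}(a,b)$ and substituting the identity $\widehat{\mathcal{J}_a^n f}=\eta\,\widehat{\mathcal{J}_a^n F}$ from the previous proposition yields
\[
\langle \widehat{\psi\ast\mathcal{J}_a^n f},\phi\rangle=\langle \widehat{\psi}\cdot\eta\,\widehat{\mathcal{J}_a^n F},\phi\rangle=\langle \eta\,\widehat{\mathcal{J}_a^n F}\cdot\widehat{\psi},\phi\rangle,
\]
which is precisely the statement of (i).

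For part (ii), the argument is entirely parallel. The temperateness of $\mathcal{D}_a^n f$ is explicitly part of the hypothesis, so the convolution theorem again gives $\widehat{\psi\ast\mathcal{D}_a^n f}=\widehat{\psi}\cdot\widehat{\mathcal{D}_a^n f}$. Plugging in the identity $\widehat{\mathcal{D}_a^n f}=\eta^{m+1}\widehat{\mathcal{J}_a^{m-n}F}$ from the previous proposition, with $m-1<n\leq m$, and pairing with $\phi$ delivers
\[
\langle\widehat{\psi\ast\mathcal{D}_a^n f},\phi\rangle=\langle \eta^{m+1}\widehat{\mathcal{J}_a^{m-n}F}\cdot\widehat{\psi},\phi\rangle,
\]
as claimed.

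The only subtlety I anticipate is interpretational rather than technical: one must read the product $\widehat{\psi}\cdot\widehat{T}$ in the distributional sense, using that $\widehat{\psi}$ is a Schwartz function (in fact the Fourier transform of a test function with compact support) and $\widehat{T}$ is a temperate distribution, so the product is a well-defined temperate distribution and its pairing with $\phi$ is unambiguous. No new analytic work is needed beyond correctly invoking the cited convolution theorem and the already-proven Fourier-transform formulas.
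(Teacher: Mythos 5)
Your proposal is correct and follows exactly the route the paper intends: the paper itself gives no separate proof, but the remarks immediately preceding the proposition (the convolution theorem from \cite{A7}, the compact support of $\psi$, the temperateness of $\mathcal{J}_a^nf$, and the Fourier-transform identities of the previous proposition) are precisely the ingredients you combine. Nothing is missing.
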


 \section*{Conclusions}
In this work, we present an extension of the Riemann-Liouville fractional derivative defined over the space of Henstock-Kurzweil integrable distributions. Thus, we obtain a generalization of classical results and new relations between the fractional derivative and integral operators. These results enable to achieve expressions for the fractional integral (of arbitrary order) of any integrable distribution $f$, even though $f$ is not induced by a  Lebesgue integrable function; and for the fractional derivative of any order of  $f'$, though $f$ is continuous and differentiable nowhere, see Example \ref{exam 1} and Example \ref{exam 2}.  Hence, an advantage of this extension is that the fractional derivative of any order $n>0$ (integer or non-integer) is always well defined over  $D_{HK}$, which contains the spaces of Lebesgue, Henstock-Kurzweil and improper integrable functions. Thus, it is not necessary to constrain the fractional derivative $D^n$ over  $AC^n$. We believe that our results might contribute to modeling real-world problems. 

\section*{Perspectives}

In the classic fractional calculus  theory there is a version of Leibniz' formula. This means, it is assumed that  the functions $f,g$ are analytic functions in order to obtain  an explicit expression of the fractional derivative of the product $fg$, see e.g. \cite[Theorem 2.18]{A6}. In the distribution theory the product of a distribution $f\in\mathcal{D}'(a,b)$ and $g\in C_c^\infty(a,b)$ is defined as
$$fg(\phi):=f(g\phi),$$
for any $\phi\in C_c^\infty(a,b)$. In particular, if $f\in D_{HK}$ and $g\in BV[a,b]$, then the product belongs to $D_{HK}$ and $$\int_a^bfg=F(b)g(b)-\int_a^bFdg,$$
where $F\in C_0$ and $DF=f$, see \cite{A24}. In this paper we  extended the integral and differential concepts of arbitrary positive order in the distributional sense. Thus a natural question is:  Under what conditions is it possible to get an explicit expression of $\mathcal{D}^n[fg]$, where $n\in\mathbb{R}^+$, $f\in D_{HK}$ and $g\in BV[a,b]$?

Here we established fundamental results for fractional calculus in the sense of the distributional Henstock-Kurzweil integral.  On the other hand, the Riemann-Liouville fractional derivative seems to be the most suitable according to theoretical and applied studies, see  \cite{At5}. Nevertheless, many numerical approximations of the fractional derivative were made considering the Caputo derivative and the Lebesgue integral. Thus,  the future possible research is rich and  has several directions, for example,  differential equations, generalized differential equations,   mathematical modeling and  numerical approximation, where integration techniques play an important role, see e.g., \cite{At1},  \cite{At4},  \cite{At5}, \cite{At6}, \cite{A3}, \cite{A9}, \cite{A12}, \cite{Mo1},  \cite{A21},  \cite{Ta1}, \cite{Ye1} and \cite{A27}, among others. 




\medskip
Received December 2019; revised February 2020.
\medskip

\end{document}